\documentclass[a4paper,UKenglish,cleveref, autoref, thm-restate]{lipics-v2021}
\nolinenumbers % TODO: remove before submitting

%This is a template for producing LIPIcs articles. 
%See  for further information.
%for A4 paper format use option "a4paper", for US-letter use option "letterpaper"
%for british hyphenation rules use option "UKenglish", for american hyphenation rules use option "USenglish"
%for section-numbered lemmas etc., use "numberwithinsect"
%for enabling cleveref support, use "cleveref"
%for enabling autoref support, use "autoref"
%for anonymousing the authors (e.g. for double-blind review), add "anonymous"
%for enabling thm-restate support, use "thm-restate"
%for enabling a two-column layout for the author/affilation part (only applicable for > 6 authors), use "authorcolumns"
%for producing a PDF according the PDF/A standard, add "pdfa"

\pdfoutput=1 %uncomment to ensure pdflatex processing (mandatatory e.g. to submit to arXiv)
\hideLIPIcs  %uncomment to remove references to LIPIcs series (logo, DOI, ...), e.g. when preparing a pre-final version to be uploaded to arXiv or another public repository

%\graphicspath{{./graphics/}}%helpful if your graphic files are in another directory

\bibliographystyle{plainurl}% the mandatory bibstyle

\title{The Compositional Structure of Bayesian Inference}

%\titlerunning{Dummy short title} %TODO optional, please use if title is longer than one line

\author{Dylan {Braithwaite}}{Department of Computer and Information Sciences, University of Strathclyde, Scotland}{}{}{}
%\and My second affiliation, Country \and \url{http://www.myhomepage.edu} }{johnqpublic@dummyuni.org}{https://orcid.org/0000-0002-1825-0097}{(Optional) author-specific funding acknowledgements}
\author{Jules {Hedges}}{Department of Computer and Information Sciences, University of Strathclyde, Scotland}{}{}{}

\author{Toby {St Clere Smithe}}{
Topos Institute, Berkeley, California
\and
Department of Experimental Psychology, University of Oxford, England}{}{}{}

%\author{Joan R. Public\footnote{Optional footnote, e.g. to mark corresponding author}}{Department of Informatics, Dummy College, [optional: Address], Country}{joanrpublic@dummycollege.org}{[orcid]}{[funding]}

\authorrunning{D. Braithwaite, J. Hedges, T. {St Clere Smithe}} %TODO mandatory. First: Use abbreviated first/middle names. Second (only in severe cases): Use first author plus 'et al.'

\Copyright{Dylan Braithwaite, Jules Hedges and Toby St Clere Smithe} %TODO mandatory, please use full first names. LIPIcs license is "CC-BY";  http://creativecommons.org/licenses/by/3.0/

\begin{CCSXML}
<ccs2012>
   <concept>
       <concept_id>10003752.10010124.10010131.10010137</concept_id>
       <concept_desc>Theory of computation~Categorical semantics</concept_desc>
       <concept_significance>500</concept_significance>
       </concept>
   <concept>
       <concept_id>10002950.10003648.10003649</concept_id>
       <concept_desc>Mathematics of computing~Probabilistic representations</concept_desc>
       <concept_significance>300</concept_significance>
       </concept>
   <concept>
       <concept_id>10002950.10003648.10003662.10003664</concept_id>
       <concept_desc>Mathematics of computing~Bayesian computation</concept_desc>
       <concept_significance>300</concept_significance>
       </concept>
 </ccs2012>
\end{CCSXML}

\ccsdesc[500]{Theory of computation~Categorical semantics}
\ccsdesc[300]{Mathematics of computing~Probabilistic representations}
\ccsdesc[300]{Mathematics of computing~Bayesian computation}
%TODO mandatory: Please choose ACM 2012 classifications from https://dl.acm.org/ccs/ccs_flat.cfm 

\keywords{monoidal categories, probabilistic programming, Bayesian inference} %TODO mandatory; please add comma-separated list of keywords

%\category{} %optional, e.g. invited paper

\relatedversion{This paper contains material from unpublished preprints \cite{smithe_bayesian_optically} and \cite{braithwaite_hedges_dependent_bayesian}.
An extended version of this paper may be available on arXiv.org.
} %optional, e.g. full version hosted on arXiv, HAL, or other respository/website
%\relatedversiondetails[linktext={opt. text shown instead of the URL}, cite=DBLP:books/mk/GrayR93]{Classification (e.g. Full Version, Extended Version, Previous Version}{URL to related version} %linktext and cite are optional
\relatedversiondetails[linktext={arXiv:2305.06112}]{Extended Version}{https://arxiv.org/abs/2305.06112}

%\supplement{}%optional, e.g. related research data, source code, ... hosted on a repository like zenodo, figshare, GitHub, ...
%\supplementdetails[linktext={opt. text shown instead of the URL}, cite=DBLP:books/mk/GrayR93, subcategory={Description, Subcategory}, swhid={Software Heritage Identifier}]{General Classification (e.g. Software, Dataset, Model, ...)}{URL to related version} %linktext, cite, and subcategory are optional

%\funding{(Optional) general funding statement \dots}%optional, to capture a funding statement, which applies to all authors. Please enter author specific funding statements as fifth argument of the \author macro.

%\acknowledgements{I want to thank \dots}%optional

%\nolinenumbers %uncomment to disable line numbering

%Editor-only macros:: begin (do not touch as author)%%%%%%%%%%%%%%%%%%%%%%%%%%%%%%%%%%
\EventEditors{J\'{e}r\^{o}me Leroux, Sylvain Lombardy, and David Peleg}
\EventNoEds{3}
\EventLongTitle{48th International Symposium on Mathematical Foundations of Computer Science (MFCS 2023)}
\EventShortTitle{MFCS 2023}
\EventAcronym{MFCS}
\EventYear{2023}
\EventDate{August 28 to September 1, 2023}
\EventLocation{Bordeaux, France}
\EventLogo{}
\SeriesVolume{272}
\ArticleNo{22}
%%%%%%%%%%%%%%%%%%%%%%%%%%%%%%%%%%%%%%%%%%%%%%%%%%%%%%

\usepackage{tangle}
\usepackage{graphics}
\usepackage{mathtools}
\usepackage{amsmath}
\usepackage{stmaryrd} % For \fatsemi
\usepackage{quiver}

\newtheorem{notation}[definition]{Notation}

\newcommand{\namedSet}[1]{\mathsf{#1}}
\newcommand{\namedCat}[1]{\mathbf{#1}}

\newcommand{\Ca}{\mathcal{C}}
\newcommand{\pr}{\mathbb{P}}
\newcommand{\op}{^\text{op}}
\newcommand{\Ob}{\mathrm{Ob}}

\newcommand{\Cat}{\namedCat{Cat}}
\newcommand{\Fam}{\namedCat{Fam}}
\newcommand{\StFam}{\namedCat{StFam}}
\newcommand{\Set}{\namedCat{Set}}

\newcommand{\BLens}{\namedCat{BLens}}
\newcommand{\DBLens}{\namedCat{DBLens}}
\newcommand{\Stat}{\namedCat{Stat}}
\newcommand{\aeq}{\simeq} % 'Almost equal'

\newcommand{\dom}{\mathrm{dom}}
 % Sequential composition
\newcommand{\id}{\text{id}}
\newcommand{\inv}{^{-1}}
\newcommand{\defeq}{\coloneqq}

\newcommand{\FinStoch}{\namedCat{FinStoch}}
\newcommand{\dup}{\mathsf{copy}} % 'duplicate' morphisms
\newcommand{\delete}{\mathsf{delete}}
\newcommand{\ProbStoch}{\mathbf{ProbStoch}}

% Function restriction from https://tex.stackexchange.com/questions/22252/how-to-typeset-function-restrictions
%\newcommand\restr[2]{{% we make the whole thing an ordinary symbol
%  \left.\kern-\nulldelimiterspace % automatically resize the bar with \right
%  #1 % the function
%  \littletaller % pretend it's a little taller at normal size
%  \right|_{#2} % this is the delimiter
%  }}
%\newcommand{\littletaller}{\mathchoice{\vphantom{\big|}}{}{}{}}

\newcommand{\restr}[2]{{#1}|_{#2}}
\newcommand{\incl}[1]{\langle{#1}\rangle}

% include a .tikz file
\newcommand{%
  	{\input{}}
}[1]{%
  	{\input{#1}}
}

% the same as \tikzfig, but in a {center} environment
\newcommand{\ctikzfig}[1]{%
\begin{center}
  %\tikzfig{#1}
\end{center}}

\begin{document}

\maketitle

\begin{abstract}
Bayes' rule tells us how to invert a causal process in order to update our beliefs in light of new evidence. If the process is believed to have a complex compositional structure, we may observe that the inversion of the whole can be computed piecewise in terms of the component processes. We study the structure of this compositional rule, noting that it relates to the lens pattern in functional programming. 
Working in a suitably general axiomatic presentation of a category of Markov kernels, 
we see how we can think of Bayesian inversion as a particular instance of a state-dependent morphism in a fibred category. We discuss the compositional nature of this, formulated as a functor on the underlying category and explore how this can used for a more type-driven approach to statistical inference.
\end{abstract}

\noindent\\
\emph{This postprint is to appear in the proceedings of the 48th Symposium on Mathematical Foundations of Computer Science (MFCS 2023).}

\section{Introduction}
A \emph{Markov kernel} $f$ is a function whose output is stochastic given the input.
It can be thought of as a stateless device which, if supplied with inputs, will produce an output that depends probabilistically on the input.
Mathematically speaking, this is nothing but a conditional distribution $\pr(y | x)$.
The problem of Bayesian inversion is that we observe the output of such a device but only have a probabilistic `prior' belief about the input, and we would like to update our beliefs about what the input was given the output.
For example, suppose a process takes the roll of an unseen die and tells us whether it was even or odd, except with probability 10\% it flips the result.
If the process tells us that a specific roll resulted in `even' then Bayes' law tells us how we should update a prior belief that the die is uniformly random, to obtain a belief about what the specific roll was.

It is commonly the case that the process is not a black box, but is a composite process formed from simpler pieces.
The conditional distribution formed by sequentially composing two such stochastic functions is given by forming their joint distribution and then marginalising over the middle variable.
In the case of a long chain of sequentially composed functions, belief updating for the whole can be done `compositionally' in terms of belief updating for each individual part, in a process notably similar to backpropagation in neural networks.
A process such as this underlies probabilistic programming languages, which are able to run programs `backwards' after conditioning on some output.

The goal of this paper is to study this process of compositional Bayesian inversion of Markov kernels in isolation, using a suitable axiomatisation of a category of Markov kernels.
We provide a method to build categories whose morphisms are pairs of a Markov kernel and an associated `Bayesian inverter', which is itself built compositionally.

Symmetric monoidal categories with compatible families of copy and delete morphisms have been identified as an expressive language for synthetically representing concepts from probability theory \cite{Cho_Jacobs_2019, Fritz_2020}. 
The typical interpretation given is that the objects represent sets or measurable spaces, and morphisms are Markov kernels between them.
Branded \emph{Markov categories} \cite{Fritz_2020} in this context, these categories have recently seen widespread use in applied category theory as a foundation of probability for two reasons: they allow working axiomatically while abstracting over the specific details of theories such as stochastic matrices, Gaussian kernels and Giry (measure-theoretic) kernels; and they provide a rich string-diagram calculus allowing for simple graphical presentations of many calculations and constructions in probability.
\begin{figure}[h]
\centering	
\begin{tabular}{| c c | c c |}
\hline
$\displaystyle \pr_X(x)\pr_Y(y)$ & \input{diag/parallel.tex} &
$\displaystyle \sum_{y \in Y} \pr_{X \times Y}(-, y)$ &
\input{diag/deletion.tex}
\\
Product distributions & Parallel composition &
Marginalisation & Deletion	\\
\hline
$\displaystyle \pr_X(x)\delta(x, x')$ & \input{diag/copy.tex} &
$\displaystyle \sum_{y \in Y} \pr_f(z |  y) \pr_g(y | x)$ & \input{diag/seq-comp.tex}
\\
Diagonal distributions & Copying &
Chapman-Kolmogorov & Sequential composition
\\
\hline
\end{tabular}
\caption{The graphical representation of common formulas for probability distributions.}
\end{figure}

A categorical translation of Bayes' law allows for a general definition of a \emph{Bayesian inverse} to a morphism in a Markov category.
However, in contrast to many other contexts where we have a notion of dualising morphisms, Bayesian inverses depend on an extra piece of data: the prior distribution.
This is abstracted in our definition as a \emph{state} on an object $X$, or a morphism out of the monoidal unit, $I \to X$, which represents a (non-conditional) probability distribution on $X$.
This leads us to the abstract definition of a Bayesian inverse \cite{Cho_Jacobs_2019}:

\begin{definition}[Bayesian Inversion]
\label{def:binv}
Let $f : X \to Y$ and $p : I \to X$, a kernel $f': Y \to X$ is called a Bayesian inverse of $f$ at $p$ if the following equation holds:
\begin{equation*}
% https://varkor.github.io/tangle/?t=W1tbXV0sW1tbMCxbXV0sWzAsW11dLFswLFtdXSxbMCxbXV0sWzAsW11dLFswLFtdXV0sW1swLFtdXSxbMCxbXV0sWzAsW11dLFsyLFsyXV0sWzEsWzAsMSwwLDFdXSxbMSxbMCwxLDAsMV1dXSxbWzEsWzAsMSwwLDBdXSxbMSxbMCwxLDAsMV1dLFsxLFswLDEsMCwxXV0sWzEsWzEsMCwxLDFdXSxbMCxbXV0sWzAsW11dXSxbWzAsW11dLFswLFtdXSxbMCxbXV0sWzIsWzFdXSxbMSxbMCwxLDAsMV1dLFsxLFswLDEsMCwxXV1dLFtbMCxbXV0sWzAsW11dLFswLFtdXSxbMCxbXV0sWzAsW11dLFswLFtdXV1dLFtbMCwwLjUsMi41LFsicCIsMCwxXV0sWzAsMiwyLjUsWyJmIiwxLDFdXSxbMCwzLjUsMi41LFsiIl1dLFswLDUsMS41LFsiZiciLDEsMV1dXSxbWzUsMSwwLCJYIl0sWzUsMywwLCJZIl1dXQ==&c=F5A3A3,F5CCA3,F5F5A3,CCF5A3,A3F5A3,A3F5CC,A3F5F5,A3CCF5,A3A3F5,CCA3F5,F5A3F5,F5A3CC
\begin{tangle}{(6,5)}[trim y]
	\tgBlank{(0,0)}{white}
	\tgBlank{(1,0)}{white}
	\tgBlank{(2,0)}{white}
	\tgBlank{(3,0)}{white}
	\tgBlank{(4,0)}{white}
	\tgBlank{(5,0)}{white}
	\tgBlank{(0,1)}{white}
	\tgBlank{(1,1)}{white}
	\tgBlank{(2,1)}{white}
	\tgBorderC{(3,1)}{1}{white}{white}
	\tgBorderA{(4,1)}{white}{white}{white}{white}
	\tgBorder{(4,1)}{0}{1}{0}{1}
	\tgBorderA{(5,1)}{white}{white}{white}{white}
	\tgBorder{(5,1)}{0}{1}{0}{1}
	\tgBorderA{(0,2)}{white}{white}{white}{white}
	\tgBorder{(0,2)}{0}{1}{0}{0}
	\tgBorderA{(1,2)}{white}{white}{white}{white}
	\tgBorder{(1,2)}{0}{1}{0}{1}
	\tgBorderA{(2,2)}{white}{white}{white}{white}
	\tgBorder{(2,2)}{0}{1}{0}{1}
	\tgBorderA{(3,2)}{white}{white}{white}{white}
	\tgBorder{(3,2)}{1}{0}{1}{1}
	\tgBlank{(4,2)}{white}
	\tgBlank{(5,2)}{white}
	\tgBlank{(0,3)}{white}
	\tgBlank{(1,3)}{white}
	\tgBlank{(2,3)}{white}
	\tgBorderC{(3,3)}{0}{white}{white}
	\tgBorderA{(4,3)}{white}{white}{white}{white}
	\tgBorder{(4,3)}{0}{1}{0}{1}
	\tgBorderA{(5,3)}{white}{white}{white}{white}
	\tgBorder{(5,3)}{0}{1}{0}{1}
	\tgBlank{(0,4)}{white}
	\tgBlank{(1,4)}{white}
	\tgBlank{(2,4)}{white}
	\tgBlank{(3,4)}{white}
	\tgBlank{(4,4)}{white}
	\tgBlank{(5,4)}{white}
	\tgCell[(0,1)]{(0,2)}{p}
	\tgCell[(1,1)]{(1.5,2)}{f}
	\tgCell[(1,1)]{(4.5,1)}{f'}
	\tgAxisLabel{(6,1.5)}{west}{X}
	\tgAxisLabel{(6,3.5)}{west}{Y}
\end{tangle}
=
% https://varkor.github.io/tangle/?t=W1tbXV0sW1tbMCxbXV0sWzAsW11dLFswLFtdXSxbMCxbXV0sWzAsW11dXSxbWzAsW11dLFswLFtdXSxbMixbMl1dLFsxLFswLDEsMCwxXV0sWzEsWzAsMSwwLDFdXV0sW1swLFtdXSxbMSxbMCwxLDAsMV1dLFsxLFsxLDAsMSwxXV0sWzAsW11dLFswLFtdXV0sW1swLFtdXSxbMCxbXV0sWzIsWzFdXSxbMSxbMCwxLDAsMV1dLFsxLFswLDEsMCwxXV1dLFtbMCxbXV0sWzAsW11dLFswLFtdXSxbMCxbXV0sWzAsW11dXV0sW1swLDIuNSwyLjUsWyIiXV0sWzAsNCwzLjUsWyJmIiwxLDFdXSxbMCwxLDIuNSxbInAiLDAsMV1dXSxbWzQsMSwwLCJYIl0sWzQsMywwLCJZIl1dXQ==&c=F5A3A3,F5CCA3,F5F5A3,CCF5A3,A3F5A3,A3F5CC,A3F5F5,A3CCF5,A3A3F5,CCA3F5,F5A3F5,F5A3CC
\begin{tangle}{(5,5)}[trim y]
	\tgBlank{(0,0)}{white}
	\tgBlank{(1,0)}{white}
	\tgBlank{(2,0)}{white}
	\tgBlank{(3,0)}{white}
	\tgBlank{(4,0)}{white}
	\tgBlank{(0,1)}{white}
	\tgBlank{(1,1)}{white}
	\tgBorderC{(2,1)}{1}{white}{white}
	\tgBorderA{(3,1)}{white}{white}{white}{white}
	\tgBorder{(3,1)}{0}{1}{0}{1}
	\tgBorderA{(4,1)}{white}{white}{white}{white}
	\tgBorder{(4,1)}{0}{1}{0}{1}
	\tgBlank{(0,2)}{white}
	\tgBorderA{(1,2)}{white}{white}{white}{white}
	\tgBorder{(1,2)}{0}{1}{0}{1}
	\tgBorderA{(2,2)}{white}{white}{white}{white}
	\tgBorder{(2,2)}{1}{0}{1}{1}
	\tgBlank{(3,2)}{white}
	\tgBlank{(4,2)}{white}
	\tgBlank{(0,3)}{white}
	\tgBlank{(1,3)}{white}
	\tgBorderC{(2,3)}{0}{white}{white}
	\tgBorderA{(3,3)}{white}{white}{white}{white}
	\tgBorder{(3,3)}{0}{1}{0}{1}
	\tgBorderA{(4,3)}{white}{white}{white}{white}
	\tgBorder{(4,3)}{0}{1}{0}{1}
	\tgBlank{(0,4)}{white}
	\tgBlank{(1,4)}{white}
	\tgBlank{(2,4)}{white}
	\tgBlank{(3,4)}{white}
	\tgBlank{(4,4)}{white}
	\tgCell[(1,1)]{(3.5,3)}{f}
	\tgCell[(0,1)]{(0.5,2)}{p}
	\tgAxisLabel{(5,1.5)}{west}{X}
	\tgAxisLabel{(5,3.5)}{west}{Y}
\end{tangle}
\end{equation*}
%If such an $f'$ exists, we will typically denote it as $f\binv p$.
\end{definition}

This categorical based approach to Bayes' law naturally leads to a question of whether Bayesian inverses compose;
that is, can we construct a Bayesian inverse of a composite kernel $f\circ g$ as a composite of the separate Bayesian inverses of $f$ and $g$?
Inspecting the relevant equation
\begin{equation} \label{eq:bayes-comp}
% https://varkor.github.io/tangle/?t=W1tbXV0sW1tbMCxbXV0sWzAsW11dLFswLFtdXSxbMCxbXV0sWzAsW11dLFswLFtdXSxbMCxbXV0sWzAsW11dLFswLFtdXSxbMCxbXV1dLFtbMCxbXV0sWzAsW11dLFswLFtdXSxbMCxbXV0sWzAsW11dLFsyLFsyXV0sWzEsWzAsMSwwLDFdXSxbMSxbMCwxLDAsMV1dLFsxLFswLDEsMCwxXV0sWzEsWzAsMSwwLDFdXV0sW1sxLFswLDEsMCwwXV0sWzEsWzAsMSwwLDFdXSxbMSxbMCwxLDAsMV1dLFsxLFswLDEsMCwxXV0sWzEsWzAsMSwwLDFdXSxbMSxbMSwwLDEsMV1dLFswLFtdXSxbMCxbXV0sWzAsW11dLFswLFtdXV0sW1swLFtdXSxbMCxbXV0sWzAsW11dLFswLFtdXSxbMCxbXV0sWzIsWzFdXSxbMSxbMCwxLDAsMV1dLFsxLFswLDEsMCwxXV0sWzEsWzAsMSwwLDFdXSxbMSxbMCwxLDAsMV1dXSxbWzAsW11dLFswLFtdXSxbMCxbXV0sWzAsW11dLFswLFtdXSxbMCxbXV0sWzAsW11dLFswLFtdXSxbMCxbXV0sWzAsW11dXV0sW1swLDAuNSwyLjUsWyJwIiwwLDFdXSxbMCwyLDIuNSxbImciLDEsMV1dLFswLDQsMi41LFsiZiIsMSwxXV0sWzAsNywxLjUsWyJmXlxcZGFnIiwxLDFdXSxbMCw5LDEuNSxbImdeXFxkYWciLDEsMV1dXV0=&c=F5A3A3,F5CCA3,F5F5A3,CCF5A3,A3F5A3,A3F5CC,A3F5F5,A3CCF5,A3A3F5,CCA3F5,F5A3F5,F5A3CC
\begin{tangle}{(10,5)}[trim y]
	\tgBlank{(0,0)}{white}
	\tgBlank{(1,0)}{white}
	\tgBlank{(2,0)}{white}
	\tgBlank{(3,0)}{white}
	\tgBlank{(4,0)}{white}
	\tgBlank{(5,0)}{white}
	\tgBlank{(6,0)}{white}
	\tgBlank{(7,0)}{white}
	\tgBlank{(8,0)}{white}
	\tgBlank{(9,0)}{white}
	\tgBlank{(0,1)}{white}
	\tgBlank{(1,1)}{white}
	\tgBlank{(2,1)}{white}
	\tgBlank{(3,1)}{white}
	\tgBlank{(4,1)}{white}
	\tgBorderC{(5,1)}{1}{white}{white}
	\tgBorderA{(6,1)}{white}{white}{white}{white}
	\tgBorder{(6,1)}{0}{1}{0}{1}
	\tgBorderA{(7,1)}{white}{white}{white}{white}
	\tgBorder{(7,1)}{0}{1}{0}{1}
	\tgBorderA{(8,1)}{white}{white}{white}{white}
	\tgBorder{(8,1)}{0}{1}{0}{1}
	\tgBorderA{(9,1)}{white}{white}{white}{white}
	\tgBorder{(9,1)}{0}{1}{0}{1}
	\tgBorderA{(0,2)}{white}{white}{white}{white}
	\tgBorder{(0,2)}{0}{1}{0}{0}
	\tgBorderA{(1,2)}{white}{white}{white}{white}
	\tgBorder{(1,2)}{0}{1}{0}{1}
	\tgBorderA{(2,2)}{white}{white}{white}{white}
	\tgBorder{(2,2)}{0}{1}{0}{1}
	\tgBorderA{(3,2)}{white}{white}{white}{white}
	\tgBorder{(3,2)}{0}{1}{0}{1}
	\tgBorderA{(4,2)}{white}{white}{white}{white}
	\tgBorder{(4,2)}{0}{1}{0}{1}
	\tgBorderA{(5,2)}{white}{white}{white}{white}
	\tgBorder{(5,2)}{1}{0}{1}{1}
	\tgBlank{(6,2)}{white}
	\tgBlank{(7,2)}{white}
	\tgBlank{(8,2)}{white}
	\tgBlank{(9,2)}{white}
	\tgBlank{(0,3)}{white}
	\tgBlank{(1,3)}{white}
	\tgBlank{(2,3)}{white}
	\tgBlank{(3,3)}{white}
	\tgBlank{(4,3)}{white}
	\tgBorderC{(5,3)}{0}{white}{white}
	\tgBorderA{(6,3)}{white}{white}{white}{white}
	\tgBorder{(6,3)}{0}{1}{0}{1}
	\tgBorderA{(7,3)}{white}{white}{white}{white}
	\tgBorder{(7,3)}{0}{1}{0}{1}
	\tgBorderA{(8,3)}{white}{white}{white}{white}
	\tgBorder{(8,3)}{0}{1}{0}{1}
	\tgBorderA{(9,3)}{white}{white}{white}{white}
	\tgBorder{(9,3)}{0}{1}{0}{1}
	\tgBlank{(0,4)}{white}
	\tgBlank{(1,4)}{white}
	\tgBlank{(2,4)}{white}
	\tgBlank{(3,4)}{white}
	\tgBlank{(4,4)}{white}
	\tgBlank{(5,4)}{white}
	\tgBlank{(6,4)}{white}
	\tgBlank{(7,4)}{white}
	\tgBlank{(8,4)}{white}
	\tgBlank{(9,4)}{white}
	\tgCell[(0,1)]{(0,2)}{p}
	\tgCell[(1,1)]{(1.5,2)}{g}
	\tgCell[(1,1)]{(3.5,2)}{f}
	\tgCell[(1,1)]{(6.5,1)}{f'}
	\tgCell[(1,1)]{(8.5,1)}{g'}
\end{tangle}
=
% https://varkor.github.io/tangle/?t=W1tbXV0sW1tbMCxbXV0sWzAsW11dLFswLFtdXSxbMCxbXV0sWzAsW11dLFswLFtdXSxbMCxbXV1dLFtbMCxbXV0sWzAsW11dLFsyLFsyXV0sWzEsWzAsMSwwLDFdXSxbMSxbMCwxLDAsMV1dLFsxLFswLDEsMCwxXV0sWzEsWzAsMSwwLDFdXV0sW1swLFtdXSxbMSxbMCwxLDAsMV1dLFsxLFsxLDAsMSwxXV0sWzAsW11dLFswLFtdXSxbMCxbXV0sWzAsW11dXSxbWzAsW11dLFswLFtdXSxbMixbMV1dLFsxLFswLDEsMCwxXV0sWzEsWzAsMSwwLDFdXSxbMSxbMCwxLDAsMV1dLFsxLFswLDEsMCwxXV1dLFtbMCxbXV0sWzAsW11dLFswLFtdXSxbMCxbXV0sWzAsW11dLFswLFtdXSxbMCxbXV1dXSxbWzAsNCwzLjUsWyJmIiwxLDFdXSxbMCw2LDMuNSxbImciLDEsMV1dLFswLDEsMi41LFsicCIsMCwxXV1dXQ==&c=F5A3A3,F5CCA3,F5F5A3,CCF5A3,A3F5A3,A3F5CC,A3F5F5,A3CCF5,A3A3F5,CCA3F5,F5A3F5,F5A3CC
\begin{tangle}{(7,5)}[trim y]
	\tgBlank{(0,0)}{white}
	\tgBlank{(1,0)}{white}
	\tgBlank{(2,0)}{white}
	\tgBlank{(3,0)}{white}
	\tgBlank{(4,0)}{white}
	\tgBlank{(5,0)}{white}
	\tgBlank{(6,0)}{white}
	\tgBlank{(0,1)}{white}
	\tgBlank{(1,1)}{white}
	\tgBorderC{(2,1)}{1}{white}{white}
	\tgBorderA{(3,1)}{white}{white}{white}{white}
	\tgBorder{(3,1)}{0}{1}{0}{1}
	\tgBorderA{(4,1)}{white}{white}{white}{white}
	\tgBorder{(4,1)}{0}{1}{0}{1}
	\tgBorderA{(5,1)}{white}{white}{white}{white}
	\tgBorder{(5,1)}{0}{1}{0}{1}
	\tgBorderA{(6,1)}{white}{white}{white}{white}
	\tgBorder{(6,1)}{0}{1}{0}{1}
	\tgBlank{(0,2)}{white}
	\tgBorderA{(1,2)}{white}{white}{white}{white}
	\tgBorder{(1,2)}{0}{1}{0}{1}
	\tgBorderA{(2,2)}{white}{white}{white}{white}
	\tgBorder{(2,2)}{1}{0}{1}{1}
	\tgBlank{(3,2)}{white}
	\tgBlank{(4,2)}{white}
	\tgBlank{(5,2)}{white}
	\tgBlank{(6,2)}{white}
	\tgBlank{(0,3)}{white}
	\tgBlank{(1,3)}{white}
	\tgBorderC{(2,3)}{0}{white}{white}
	\tgBorderA{(3,3)}{white}{white}{white}{white}
	\tgBorder{(3,3)}{0}{1}{0}{1}
	\tgBorderA{(4,3)}{white}{white}{white}{white}
	\tgBorder{(4,3)}{0}{1}{0}{1}
	\tgBorderA{(5,3)}{white}{white}{white}{white}
	\tgBorder{(5,3)}{0}{1}{0}{1}
	\tgBorderA{(6,3)}{white}{white}{white}{white}
	\tgBorder{(6,3)}{0}{1}{0}{1}
	\tgBlank{(0,4)}{white}
	\tgBlank{(1,4)}{white}
	\tgBlank{(2,4)}{white}
	\tgBlank{(3,4)}{white}
	\tgBlank{(4,4)}{white}
	\tgBlank{(5,4)}{white}
	\tgBlank{(6,4)}{white}
	\tgCell[(1,1)]{(3.5,3)}{f}
	\tgCell[(1,1)]{(5.5,3)}{g}
	\tgCell[(0,1)]{(0.5,2)}{p}
\end{tangle}	
\end{equation}
we can observe that this is indeed true if $f'$ and $g'$ are Bayesian inverse to $f$ and $g$, but we have to ask for $f'$ not to be inverse to $f$ at $p$, but rather at $g\circ p$:
in some sense the type of this composition is dependent on the kernels that it is being applied to.
In other words, assuming a hypothetical function $\mathsf{BayesInv}(f, p)$ which computes Bayesian inverses, the composite could be expressed as
\[\mathsf{BayesInv}(f\circ g, p) = \mathsf{BayesInv}(g, p) \circ \mathsf{BayesInv}(f, g \circ p)\]
Notably this has a similar form to the reverse-mode chain rule for Jacobian matrices that underlies backpropagation, $J^\top_{g \circ f} (x) = J^\top_g (f (x)) J^\top_f (x)$.
As such we think of the composition rule as a \emph{chain rule for Bayesian updating}.
To formalise the compositionality of such a construction we would typically like to promote it to a functor on our Markov category, which we denote in general as $\Ca$. The fact that Bayesian inverses are indexed by $p$ in this way makes choosing the target of such a a Bayesian inversion functor nontrivial however.

One approach to making Bayesian inversion functorial, taken by Cho and Jacobs \cite{Cho_Jacobs_2019}, is to work instead in a category where the objects $X$ are equipped with a choice of $p : I \to X$: in this case, there is always a canonical choice of inverse morphism, making Bayesian inversion into a `dagger' functor.
Although such a setting is still a Markov category, this approach does depart from the operational interpretation of morphisms as stochastic kernels, because the distribution produced is already present in the type of the morphism. So the morphisms have more of a `relational' role.

In this paper we take an alternative approach and instead consider a category where the morphisms are indexed families of kernels.
Consequently, we can represent the entire $(I \to X)$-indexed family of Bayesian inverses as a single morphism, resolving the problems faced previously.
The morphisms of this category have a very similar structure to a \textit{lens} from database theory and functional programming \cite{Foster_Greenwald_Moore_Pierce_Schmitt_2007}, but, instead of performing deterministic updates to data structures, this performs Bayesian updates to beliefs.
%\footnote{In the deterministic database setting, it is often demanded that lenses satisfy \textit{lens laws} that characterize well-behaved updating, for instance by asserting that observing an updated record will return the data that was used to update it; this is the `GetPut' law.
%Bayesian lenses do not generally satisfy such laws, as they `mix' prior information (that which was originally in the record) with the updates, weighted according to the strength of belief.}.
We therefore call such pairs \textit{Bayesian lenses}.

This category is however `too big' in the sense that it contains everything whose type matches that of the Bayesian inverse. Rather than a failure of the abstraction, we view this as a feature allowing us to encode not only exact Bayesian inversion, but also approximate updaters and other structures. To  pick out the specific lenses corresponding to the actual Bayesian inverse of $f$ we therefore use a functor $\Ca \to \BLens$, which encodes a choice of Bayesian inverse for each kernel.

The category of Bayesian lenses is constructed as a fibred category that is closely related to the families fibration, commonly used in the semantics of dependent types.
We embrace this relationship by extending $\BLens$ into a larger category constructed from a generalised families fibration which leads us to a category of \emph{dependent Bayesian lenses}, which have not only indexed families as morphisms, but indexed objects as well.
This construction admits Bayesian inverses whose domains are allowed to depend on the prior distribution at which the inverse is taken.
In certain Markov categories this allows us to consider inverses as being restricted to the support of the prior, which puts the construction on a neater theoretical foundation, and which clarifies thinking about the Bayesian inversion of structure maps in the category.
Since Bayes' law does not define how beliefs should be updated after a zero-probability observation, this falls into the usual pattern in computer science of using a type system to `make illegal states unrepresentable'\footnote{This phrase originated with Yaron Minsky in the blog post \url{https://blog.janestreet.com/effective-ml-revisited/}}.

\section{Preliminaries}
\begin{definition}[Markov Categories \cite{Fritz_2020}]
	A Markov category is a symmetric monoidal category $\mathcal C$ with the structure of a commutative comonoid on each object, such that:
	\begin{itemize}
		\item the assignment of counits (`delete' or `discard' maps) to objects is natural; and
		\item comultiplications (`copy' maps) are compatible with the monoidal structure:
			\begin{equation*}
			% https://varkor.github.io/tangle/?t=W1tbXV0sW1tbMCxbXV0sWzIsWzJdXSxbMSxbMCwxLDAsMV1dXSxbWzEsWzAsMSwwLDFdXSxbMSxbMSwwLDEsMV1dLFswLFtdXV0sW1swLFtdXSxbMixbMV1dLFsxLFswLDEsMCwxXV1dXSxbXSxbWzIsMCwwLCJYIFxcb3RpbWVzIFkiXSxbMCwxLDIsIlhcXG90aW1lcyBZIl0sWzIsMiwwLCJYIFxcb3RpbWVzIFkiXV1d&c=F5A3A3,F5CCA3,F5F5A3,CCF5A3,A3F5A3,A3F5CC,A3F5F5,A3CCF5,A3A3F5,CCA3F5,F5A3F5,F5A3CC
\begin{tangle}{(3,3)}
	\tgBlank{(0,0)}{white}
	\tgBorderC{(1,0)}{1}{white}{white}
	\tgBorderA{(2,0)}{white}{white}{white}{white}
	\tgBorder{(2,0)}{0}{1}{0}{1}
	\tgBorderA{(0,1)}{white}{white}{white}{white}
	\tgBorder{(0,1)}{0}{1}{0}{1}
	\tgBorderA{(1,1)}{white}{white}{white}{white}
	\tgBorder{(1,1)}{1}{0}{1}{1}
	\tgBlank{(2,1)}{white}
	\tgBlank{(0,2)}{white}
	\tgBorderC{(1,2)}{0}{white}{white}
	\tgBorderA{(2,2)}{white}{white}{white}{white}
	\tgBorder{(2,2)}{0}{1}{0}{1}
	\tgAxisLabel{(3,0.5)}{west}{X \otimes Y}
	\tgAxisLabel{(0,1.5)}{east}{X\otimes Y}
	\tgAxisLabel{(3,2.5)}{west}{X \otimes Y}
\end{tangle}
\ \ \ \ \ \ 
			=
			% https://varkor.github.io/tangle/?t=W1tbXV0sW1tbMCxbXV0sWzIsWzJdXSxbMSxbMCwxLDAsMV1dLFsxLFswLDEsMCwxXV0sWzEsWzAsMSwwLDFdXV0sW1sxLFswLDEsMCwxXV0sWzEsWzEsMCwxLDFdXSxbMixbMl1dLFsxLFswLDEsMCwxXV0sWzEsWzAsMSwwLDFdXV0sW1swLFtdXSxbMixbMV1dLFsxLFsxLDEsMSwxXV0sWzIsWzNdXSxbMCxbXV1dLFtbMSxbMCwxLDAsMV1dLFsxLFswLDEsMCwxXV0sWzEsWzEsMCwxLDFdXSxbMixbMV1dLFsxLFswLDEsMCwxXV1dLFtbMCxbXV0sWzAsW11dLFsyLFsxXV0sWzEsWzAsMSwwLDFdXSxbMSxbMCwxLDAsMV1dXV0sW10sW1s0LDAsMCwiWCJdLFswLDEsMiwiWCJdLFs0LDEsMCwiWSJdLFswLDMsMiwiWSJdLFs0LDMsMCwiWCJdLFs0LDQsMCwiWSJdXV0=&c=F5A3A3,F5CCA3,F5F5A3,CCF5A3,A3F5A3,A3F5CC,A3F5F5,A3CCF5,A3A3F5,CCA3F5,F5A3F5,F5A3CC
\ \ \ 
\begin{tangle}{(5,5)}
	\tgBlank{(0,0)}{white}
	\tgBorderC{(1,0)}{1}{white}{white}
	\tgBorderA{(2,0)}{white}{white}{white}{white}
	\tgBorder{(2,0)}{0}{1}{0}{1}
	\tgBorderA{(3,0)}{white}{white}{white}{white}
	\tgBorder{(3,0)}{0}{1}{0}{1}
	\tgBorderA{(4,0)}{white}{white}{white}{white}
	\tgBorder{(4,0)}{0}{1}{0}{1}
	\tgBorderA{(0,1)}{white}{white}{white}{white}
	\tgBorder{(0,1)}{0}{1}{0}{1}
	\tgBorderA{(1,1)}{white}{white}{white}{white}
	\tgBorder{(1,1)}{1}{0}{1}{1}
	\tgBorderC{(2,1)}{1}{white}{white}
	\tgBorderA{(3,1)}{white}{white}{white}{white}
	\tgBorder{(3,1)}{0}{1}{0}{1}
	\tgBorderA{(4,1)}{white}{white}{white}{white}
	\tgBorder{(4,1)}{0}{1}{0}{1}
	\tgBlank{(0,2)}{white}
	\tgBorderC{(1,2)}{0}{white}{white}
	\tgBorderA{(2,2)}{white}{white}{white}{white}
	\tgBorder{(2,2)}{1}{1}{1}{1}
	\tgBorderC{(3,2)}{2}{white}{white}
	\tgBlank{(4,2)}{white}
	\tgBorderA{(0,3)}{white}{white}{white}{white}
	\tgBorder{(0,3)}{0}{1}{0}{1}
	\tgBorderA{(1,3)}{white}{white}{white}{white}
	\tgBorder{(1,3)}{0}{1}{0}{1}
	\tgBorderA{(2,3)}{white}{white}{white}{white}
	\tgBorder{(2,3)}{1}{0}{1}{1}
	\tgBorderC{(3,3)}{0}{white}{white}
	\tgBorderA{(4,3)}{white}{white}{white}{white}
	\tgBorder{(4,3)}{0}{1}{0}{1}
	\tgBlank{(0,4)}{white}
	\tgBlank{(1,4)}{white}
	\tgBorderC{(2,4)}{0}{white}{white}
	\tgBorderA{(3,4)}{white}{white}{white}{white}
	\tgBorder{(3,4)}{0}{1}{0}{1}
	\tgBorderA{(4,4)}{white}{white}{white}{white}
	\tgBorder{(4,4)}{0}{1}{0}{1}
	\tgAxisLabel{(5,0.5)}{west}{X}
	\tgAxisLabel{(0,1.5)}{east}{X}
	\tgAxisLabel{(5,1.5)}{west}{Y}
	\tgAxisLabel{(0,3.5)}{east}{Y}
	\tgAxisLabel{(5,3.5)}{west}{X}
	\tgAxisLabel{(5,4.5)}{west}{Y}
\end{tangle}
			\end{equation*}
	\end{itemize}
\end{definition}
\begin{example}[Stochastic Matrices, \cite{Fritz_2020} example 2.5]
\label{ex:finstoch}
There is a Markov category $\mathbf{FinStoch}$ whose objects are finite sets, and whose morphisms $f : X \to Y$ are stochastic matrices $f : X \times Y \to [0, 1]$, i.e. satisfying $\sum_{y \in Y} f (x, y) = 1$ for all $x \in X$.
Identity morphisms are identity matrices, and composition of morphisms is by matrix multiplication, known in this context as the Chapman-Kolmogorov equation, $(g \circ f) (x, z) = \sum_{y \in Y} f (x, y) \cdot g (y, z)$.
The symmetric monoidal structure of $\mathbf{FinStoch}$ is given on objects by cartesian product, and on morphisms by tensor product of stochastic matrices: $(f \otimes g) ((x, x'), (y, y')) = f (x, y) \cdot g (x', y')$.
\end{example}

\begin{example}[Gaussian Kernels, \cite{Fritz_2020} section 6]
\label{ex:gauss}
There is a Markov category $\mathbf{Gauss}$ whose objects are Euclidean spaces $\mathbb R^n$, and whose morphisms $f : \mathbb R^m \to \mathbb R^n$ are triples of a matrix $M \in \mathbb R^{m \times n}$, a vector $\mu \in \mathbb R^n$ and a positive semidefinite matrix $\sigma \in \mathbb R^{n \times n}$, considered to represent an affine function with independent Gaussian noise $f (v) = Mv + \mathcal N (\mu, \sigma)$.
The definitions of categorical composition and monoidal product are slightly involved, but can be derived from laws of Gaussian probability.
\end{example}

\begin{example}[Probability Kernels, \cite{Fritz_2020} section 4]
Giry \cite{giry82} introduced a monad $\mathcal G$ on the category of measurable spaces, now known as the Giry monad, taking each measurable space to the space of probability measures on it, equipped with an appropriate measurable structure.
The Kleisli category $\mathrm{Kl} (\mathcal G)$ is a Markov category, also known as $\mathbf{Stoch}$, which allows working with arbitrary measure-theoretic probability and contains many other important Markov categories as subcategories, including the previous two examples.
Although this category is in a sense canonical, it suffers from many undesirable properties due to extremely general nature of measure theory. One example which is relevant in the later sections is the spaces representing the support of certain distributions.
\end{example}

Keeping these examples in mind, we refer to the morphisms of such a category as \emph{Markov kernels} or more simply as \emph{kernels}.

\begin{definition}[Almost-Sure Equality \cite{Cho_Jacobs_2019}]
A pair of kernels $f, g : X \to Y$ are $p$-almost-surely equal for some $p: I \to X$, if the following equality holds
\begin{equation*}
% https://varkor.github.io/tangle/?t=W1tbXV0sW1tbMCxbXV0sWzAsW11dLFswLFtdXSxbMCxbXV0sWzAsW11dXSxbWzAsW11dLFswLFtdXSxbMixbMl1dLFsxLFswLDEsMCwxXV0sWzEsWzAsMSwwLDFdXV0sW1swLFtdXSxbMSxbMCwxLDAsMV1dLFsxLFsxLDAsMSwxXV0sWzAsW11dLFswLFtdXV0sW1swLFtdXSxbMCxbXV0sWzIsWzFdXSxbMSxbMCwxLDAsMV1dLFsxLFswLDEsMCwxXV1dLFtbMCxbXV0sWzAsW11dLFswLFtdXSxbMCxbXV0sWzAsW11dXV0sW1swLDEsMi41LFsicCIsMCwxXV0sWzAsNCwxLjUsWyJnIiwxLDFdXV0sW1s0LDEsMCwiWSJdLFs0LDMsMCwiWCJdXV0=&c=F5A3A3,F5CCA3,F5F5A3,CCF5A3,A3F5A3,A3F5CC,A3F5F5,A3CCF5,A3A3F5,CCA3F5,F5A3F5,F5A3CC
\begin{tangle}{(5,5)}[trim y]
	\tgBlank{(0,0)}{white}
	\tgBlank{(1,0)}{white}
	\tgBlank{(2,0)}{white}
	\tgBlank{(3,0)}{white}
	\tgBlank{(4,0)}{white}
	\tgBlank{(0,1)}{white}
	\tgBlank{(1,1)}{white}
	\tgBorderC{(2,1)}{1}{white}{white}
	\tgBorderA{(3,1)}{white}{white}{white}{white}
	\tgBorder{(3,1)}{0}{1}{0}{1}
	\tgBorderA{(4,1)}{white}{white}{white}{white}
	\tgBorder{(4,1)}{0}{1}{0}{1}
	\tgBlank{(0,2)}{white}
	\tgBorderA{(1,2)}{white}{white}{white}{white}
	\tgBorder{(1,2)}{0}{1}{0}{1}
	\tgBorderA{(2,2)}{white}{white}{white}{white}
	\tgBorder{(2,2)}{1}{0}{1}{1}
	\tgBlank{(3,2)}{white}
	\tgBlank{(4,2)}{white}
	\tgBlank{(0,3)}{white}
	\tgBlank{(1,3)}{white}
	\tgBorderC{(2,3)}{0}{white}{white}
	\tgBorderA{(3,3)}{white}{white}{white}{white}
	\tgBorder{(3,3)}{0}{1}{0}{1}
	\tgBorderA{(4,3)}{white}{white}{white}{white}
	\tgBorder{(4,3)}{0}{1}{0}{1}
	\tgBlank{(0,4)}{white}
	\tgBlank{(1,4)}{white}
	\tgBlank{(2,4)}{white}
	\tgBlank{(3,4)}{white}
	\tgBlank{(4,4)}{white}
	\tgCell[(0,1)]{(0.5,2)}{p}
	\tgCell[(1,1)]{(3.5,1)}{f}
	\tgAxisLabel{(5,1.5)}{west}{Y}
	\tgAxisLabel{(5,3.5)}{west}{X}
\end{tangle}\ \ \ \ \ 
=
% https://varkor.github.io/tangle/?t=W1tbXV0sW1tbMCxbXV0sWzAsW11dLFswLFtdXSxbMCxbXV0sWzAsW11dXSxbWzAsW11dLFswLFtdXSxbMixbMl1dLFsxLFswLDEsMCwxXV0sWzEsWzAsMSwwLDFdXV0sW1swLFtdXSxbMSxbMCwxLDAsMV1dLFsxLFsxLDAsMSwxXV0sWzAsW11dLFswLFtdXV0sW1swLFtdXSxbMCxbXV0sWzIsWzFdXSxbMSxbMCwxLDAsMV1dLFsxLFswLDEsMCwxXV1dLFtbMCxbXV0sWzAsW11dLFswLFtdXSxbMCxbXV0sWzAsW11dXV0sW1swLDEsMi41LFsicCIsMCwxXV0sWzAsNCwxLjUsWyJnIiwxLDFdXV1d&c=F5A3A3,F5CCA3,F5F5A3,CCF5A3,A3F5A3,A3F5CC,A3F5F5,A3CCF5,A3A3F5,CCA3F5,F5A3F5,F5A3CC
\begin{tangle}{(5,5)}[trim y]
	\tgBlank{(0,0)}{white}
	\tgBlank{(1,0)}{white}
	\tgBlank{(2,0)}{white}
	\tgBlank{(3,0)}{white}
	\tgBlank{(4,0)}{white}
	\tgBlank{(0,1)}{white}
	\tgBlank{(1,1)}{white}
	\tgBorderC{(2,1)}{1}{white}{white}
	\tgBorderA{(3,1)}{white}{white}{white}{white}
	\tgBorder{(3,1)}{0}{1}{0}{1}
	\tgBorderA{(4,1)}{white}{white}{white}{white}
	\tgBorder{(4,1)}{0}{1}{0}{1}
	\tgBlank{(0,2)}{white}
	\tgBorderA{(1,2)}{white}{white}{white}{white}
	\tgBorder{(1,2)}{0}{1}{0}{1}
	\tgBorderA{(2,2)}{white}{white}{white}{white}
	\tgBorder{(2,2)}{1}{0}{1}{1}
	\tgBlank{(3,2)}{white}
	\tgBlank{(4,2)}{white}
	\tgBlank{(0,3)}{white}
	\tgBlank{(1,3)}{white}
	\tgBorderC{(2,3)}{0}{white}{white}
	\tgBorderA{(3,3)}{white}{white}{white}{white}
	\tgBorder{(3,3)}{0}{1}{0}{1}
	\tgBorderA{(4,3)}{white}{white}{white}{white}
	\tgBorder{(4,3)}{0}{1}{0}{1}
	\tgBlank{(0,4)}{white}
	\tgBlank{(1,4)}{white}
	\tgBlank{(2,4)}{white}
	\tgBlank{(3,4)}{white}
	\tgBlank{(4,4)}{white}
	\tgCell[(0,1)]{(0.5,2)}{p}
	\tgCell[(1,1)]{(3.5,1)}{g}
	\tgAxisLabel{(5,1.5)}{west}{Y}
	\tgAxisLabel{(5,3.5)}{west}{X}
\end{tangle}
\end{equation*}
In this case we write $f \aeq_p g$.
\end{definition}

We note that this equation is similar in shape to that which defines a Bayesian inverse.
Indeed this pattern of considering a kernel composed onto one branch of a copy is a standard way to consider a kernel in the context of a distribution on its domain.
The appearance of this pattern in the definition of the Bayesian inverse means exactly that we are defining the concept only up to almost-sure equality.

\begin{proposition}[Bayesian inverses are almost-surely equal]
For any kernel $f: X \to Y$ and state $p : I \to X$, if two kernels $g, g' : Y \to X$ both satisfy the conditions of a Bayesian inverse to $f$, then $g \aeq_{f\circ p} g'$.
\qed
\end{proposition}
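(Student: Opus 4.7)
The plan is to exploit the shape of the Bayesian inverse equation, which already contains (on its left-hand side) exactly the diagram that appears in the definition of almost-sure equality at $f \circ p$. Once this is noticed, the proof reduces to a short chain of equalities with no further probabilistic content.

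First I would write out Definition~\ref{def:binv} twice, once for $g$ and once for $g'$. Both equations have the same right-hand side, namely the joint on $X \otimes Y$ obtained by priming with $p$, copying, and applying $f$ to one branch. Hence the two left-hand sides are equal to each other:
\[
(g \otimes \mathrm{id}_Y) \circ \mathsf{copy}_Y \circ f \circ p \;=\; (g' \otimes \mathrm{id}_Y) \circ \mathsf{copy}_Y \circ f \circ p,
\]
which I would prefer to display as the diagrammatic identity asserting that the two "invert-then-copy" pictures (with $g$ and with $g'$) coincide.

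Second, I would observe that this is precisely the definition of $g \aeq_{f\circ p} g'$: the almost-sure equality diagram at state $q : I \to Y$ is $(g \otimes \mathrm{id}_Y) \circ \mathsf{copy}_Y \circ q = (g' \otimes \mathrm{id}_Y) \circ \mathsf{copy}_Y \circ q$, and we have just established this for $q = f \circ p$. The observation noted in the paragraph immediately before the proposition — that the Bayesian inverse equation is itself stated in terms of this "joint" pattern and hence only determines its subject up to almost-sure equality — is exactly the content being formalised here.

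There is no real obstacle: the proof is essentially a rebracketing of diagrams, and the only thing to be careful about is matching the wire labels and the direction of the swap/copy so that the two pasted pictures really do coincide on the nose as morphisms $I \to Y \otimes X$. I would therefore keep the argument to a couple of lines, perhaps with a single string-diagram display showing the chain of equalities \emph{Bayes inverse for $g$} $=$ \emph{joint from $p$ and $f$} $=$ \emph{Bayes inverse for $g'$}, and conclude by citing the definition of $\aeq_{f \circ p}$.
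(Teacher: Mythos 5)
Your proof is correct and is exactly the argument the paper intends: the paper leaves the proposition as immediate (marked \qed), with the preceding paragraph making the same observation that the Bayesian-inverse equation for $g$ and $g'$ shares its right-hand side, so equating the two left-hand sides yields precisely the defining equation of $\aeq_{f\circ p}$.
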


Considering the interpretation of these equations, this result makes intuitive sense.
It essentially says that Bayesian inversion is uniquely defined only on the points where Bayes' law would not have you divide by zero.
This allows us to be mindful of the partiality of Bayes' law without requiring the added complication of considering partial maps.
However, this will complicate matters later when we establish the functoriality of Bayes' law.
In fact, as we will see, we will need some extra coherence assumptions to ensure that Bayesian inversion is functorial, rather than only almost-surely functorial.

\section{Bayesian Updates Compose Optically}
The Bayesian inverse of a kernel is defined with respect to a prior state on the domain of the kernel.
We want to consider the general inverse of a kernel $A \to B$ as a function which assigns to each prior $p: I \to A$, a $p$-inverse $B \to A$.
However the space of priors with respect to which a kernel can be inverted depends on the domain of the kernel: it is the set of states $\Ca(I, \dom(f))$.
To formalise this dependence we construct an \emph{indexed category}.
Just as indexed sets $(X_i)_{i \in I}$ can be thought of as functions $I \to \namedSet{Set}$, we define indexed categories as (pseudo)functors into $\Cat$.
We consider Bayesian inverses in the context of an indexed category
$\Ca \to \Cat$
sending $X$ to a category of $\Ca(I, X)$-indexed kernels.

\begin{definition}[The $\Stat$ construction]
\label{def:stat-cat}
We define the indexed category of $\Ca$-state-indexed kernels, $\Stat: \Ca\op \to \Cat$, as follows.
\\\\
For each $X$, $\Stat(X)$ is the category of $\Ca(I, X)$-indexed kernels, where
\begin{itemize}
	\item objects are the objects of $\Ca$;
	\item morphisms $A \to B$ are functions $\Ca(I, X) \to \Ca(A, B)$; and
	\item composition and identities are pointwise given by the corresponding structure in $\Ca$.
\end{itemize}
For $f : X \to Y$ in $\Ca$ we obtain a reindexing functor $f^*: \Stat(Y) \to \Stat(X)$ which
\begin{itemize}
	\item acts as the identity on objects; and
	\item reindexes functions by sending $\sigma: \Ca(I, Y) \to \Ca(A, B)$ to $f^*\sigma$ defined by:
		% https://q.uiver.app/?q=WzAsNixbMCwwLCJcXENhKEksIFgpIl0sWzEsMCwiXFxDYShJLCBZKSJdLFsyLDAsIlxcQ2EoQSwgQikiXSxbMCwxLCJwIl0sWzEsMSwiZlxcY2lyYyBwIl0sWzIsMSwiXFxzaWdtYShmXFxjaXJjIHApIl0sWzAsMV0sWzEsMl0sWzMsNCwiIiwwLHsic2hvcnRlbiI6eyJzb3VyY2UiOjIwLCJ0YXJnZXQiOjIwfSwic3R5bGUiOnsidGFpbCI6eyJuYW1lIjoibWFwcyB0byJ9fX1dLFs0LDUsIiIsMCx7InNob3J0ZW4iOnsic291cmNlIjoyMCwidGFyZ2V0IjoyMH0sInN0eWxlIjp7InRhaWwiOnsibmFtZSI6Im1hcHMgdG8ifX19XV0=
\[\begin{tikzcd}[ampersand replacement=\&,row sep=tiny]
	{\Ca(I, X)} \& {\Ca(I, Y)} \& {\Ca(A, B)} \\
	p \& {f\circ p} \& {\sigma(f\circ p)}
	\arrow[from=1-1, to=1-2]
	\arrow[from=1-2, to=1-3]
	\arrow[shorten <=6pt, shorten >=6pt, maps to, from=2-1, to=2-2]
	\arrow[shorten <=5pt, shorten >=5pt, maps to, from=2-2, to=2-3]
\end{tikzcd}\]

\end{itemize}
Since the reindexing functors are defined by pre-composition it is easy to verify that $\Stat$ is indeed a functor.
\end{definition}

This provides sufficient expressive power to represent general Bayesian inverses. For a kernel $f: X \to Y$, the general Bayesian inverse of $f$ is a morphism $Y \to X$ in $\Stat(X)$.
However it is awkward to have to work across a large collection of categories in order to represent the inverses for every kernel. 
Really we would like to assemble the collection of categories into a single category containing all of the inverses.
Fortunately this category is described exactly by the \emph{Grothendieck construction} \cite[\S8.3]{Borceux1994Handbook2}.
The Grothendieck construction realises an equivalence between indexed categories and \emph{fibrations} -- functors with a property of being suitably projection-like.
The core of the construction is a disjoint union of the constituent categories, equipped with morphisms induced by the reindexing functors that connect each of the otherwise disjoint components.
\begin{center}
\begin{tabular}{l l}
	Indexed categories & $F: \Ca \to \Cat$\\
	\hline
	Fibrations & $P_F : \left(\coprod_{C \in \Ca} F(C)\right) \to \Ca$ 
\end{tabular}
\end{center}
We proceed to construct the category of all state-indexed kernels in this way, which we will call the category of \emph{Bayesian lenses}.

\begin{definition}[Bayesian Lenses]
\label{def:blens}
We define the category of \emph{Bayesian lenses} as the Grothendieck construction
$\BLens(\Ca) = \coprod_{X \in \Ca} \Stat(X)\op.$
\end{definition}

Explicitly this is a category whose objects are pairs $\binom X A$ of objects from $\Ca$, and whose morphisms are of the form $(f, f^\sharp): \binom X A \to \binom Y B$ where
$f$ is a kernel $X \to Y$,
and $f^\sharp$ is a family of `backward' kernels, $\Ca(I, X) \to \Ca(B, A)$.
The composition of two such morphisms
	\[\binom X A \xrightarrow{(f, {f^\sharp})} 
	  \binom Y B \xrightarrow{(g, {g^\sharp})} 
	  \binom Z C\]
	is 
	  $(g \circ f, (g \circ f)^\sharp)$
	where 
	  $(g \circ f)^\sharp$
	is a function
	  $\Ca(I, X) \to \Ca(C, A)$, 
	sending $p : I \to X$ to the composition
	\[ C \xrightarrow{g^\sharp(f\circ p)}
	   B \xrightarrow{\ \ f^\sharp(p) \ }
	   A.
	\]
We call the morphisms of this category \emph{Bayesian lenses}.

\begin{remark}
Contrary to our general description of the Grothendieck construction, we in fact used the `fibrewise' opposite of $\Stat$, taking $\Stat(X)\op$ for each $X$ in $\Ca$.
Ultimately this does not change the space of functions we can represent, but it provides a neater type signature for the morphisms we care about.
Namely this means the Bayesian inverse of a kernel will be a morphism $\binom X X \to \binom Y Y$, rather than $\binom X Y \to \binom Y X$.
We think of the appearance of the opposite here as signalling that we are working with a kind of bidirectional process.

Spivak \cite{Spivak_2019} proposes that the fibrewise opposite of a fibration be considered as a generalisation of the lens construction from database theory and functional programming \cite{Foster_Greenwald_Moore_Pierce_Schmitt_2007}.
More clearly, if $\Ca$ is the category of sets (which is degenerately a Markov category), then $f^\sharp : \Ca (I, X) \to \Ca (B, A)$ can be equivalently written $f^\sharp : X \times B \to A$, and we recover the standard definition of a lens.
This motivates our use of the term ``Bayesian lens'' for these morphisms.
\end{remark}

\begin{proposition}[Bayesian inversion is almost functorial]
\label{thm:binv-functorial}
If $\Ca$ has Bayesian inverses for ever kernel at every prior,
then Bayesian inversion defines a functor $T : \Ca \to \BLens(\Ca)$ up to almost-sure equality.
\end{proposition}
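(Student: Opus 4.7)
The plan is to define $T$ on objects by $T(X) = \binom{X}{X}$ and on kernels $f : X \to Y$ by $T(f) = (f, f^\sharp)$, where $f^\sharp : \Ca(I, X) \to \Ca(Y, X)$ selects, for each prior $p : I \to X$, a Bayesian inverse of $f$ at $p$. By hypothesis such an inverse always exists, so we may fix one choice globally. Because Bayesian inverses are only unique up to almost-sure equality, we can only expect $T$ to preserve identities and composition in that same sense.

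\textbf{Identity preservation.} For any object $X$ and prior $p : I \to X$, the kernel $\id_X$ itself satisfies Definition~\ref{def:binv} at $p$: both sides of the defining equation collapse to the diagonal state $\dup \circ p$ on $X \otimes X$. By the preceding proposition on almost-sure uniqueness of Bayesian inverses, $(\id_X)^\sharp(p) \aeq_p \id_X$, and hence $T(\id_X) \aeq \id_{T(X)}$ in $\BLens(\Ca)$.

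\textbf{Composition preservation.} The forward components of $T(g \circ f)$ and $T(g) \circ T(f)$ agree strictly by the definition of composition in $\BLens(\Ca)$. It remains to compare backward components at a prior $p$: namely $(g \circ f)^\sharp(p)$ on one side, and $f^\sharp(p) \circ g^\sharp(f \circ p)$ on the other. I would show that the latter is itself a Bayesian inverse of $g \circ f$ at $p$; by a.s.~uniqueness it then agrees with the former up to $(g \circ f \circ p)$-almost-sure equality. The verification is the two-step string-diagrammatic rewrite already anticipated as equation~\eqref{eq:bayes-comp}: starting from the left-hand side with the chosen $f^\sharp(p)$ and $g^\sharp(f \circ p)$ substituted in, first apply the defining equation of $f^\sharp(p)$ to absorb $f$ and $f^\sharp(p)$ into a copy-then-$f$ pattern; then apply the defining equation of $g^\sharp(f \circ p)$, noting that the effective prior feeding $g$ at that stage is exactly $f \circ p$, to collapse $g$ and $g^\sharp(f \circ p)$. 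The result is the right-hand side of Definition~\ref{def:binv} for the composite $g \circ f$ at $p$.

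\textbf{Main obstacle.} The delicate point is the bookkeeping of priors in the composition step: the inner inverse $g^\sharp$ must be evaluated at $f \circ p$ rather than at $p$, and one must recognise that this is precisely the prior produced by rewriting via $f^\sharp$. Once this matching is made explicit, both functoriality identities reduce to two applications of Definition~\ref{def:binv} together with the a.s.-uniqueness proposition stated just before this one. Strict functoriality cannot be expected, since the global choice of $f^\sharp$ is arbitrary off the support of each prior; this is the precise reason the statement only asserts functoriality up to $\aeq$, and it motivates the refinement of the setting pursued in subsequent sections.
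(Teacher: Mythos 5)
Your proposal is correct and follows essentially the same route as the paper: show that $f^\sharp(p)\circ g^\sharp(f\circ p)$ satisfies the defining equation of a Bayesian inverse of $g\circ f$ at $p$ via the diagram chase of equation~\eqref{eq:bayes-comp}, then conclude by almost-sure uniqueness (the paper phrases this as three applications of Definition~\ref{def:binv}, effectively folding the uniqueness step into the chase, and omits the identity check that you include). One small slip: with your labelling $X\xrightarrow{f}Y\xrightarrow{g}Z$, the first rewrite must absorb $g$ together with $g^\sharp(f\circ p)$ --- the inverse adjacent to the copy of $g\circ f\circ p$ --- and only afterwards $f$ with $f^\sharp(p)$; the order you state is the reverse and would not match the copied state as drawn, though this does not affect the substance of the argument.
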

\begin{proof}
Because Bayesian inverses are only unique up to almost-equality, $T$ is only almost surely functorial: it maps each object $X$ to $\binom X X$, and each kernel $f:X\to Y$ to a lens $(f,f^\sharp):\binom X X\to\binom Y Y$ whose inverse component is given by a representative of the almost-surely unique family of Bayesian inversions of $f$.

Given a pair of composable kernels $X \xrightarrow{f} Y \xrightarrow{g} Z$, we need to check that this mapping is almost surely functorial.
Following equation \eqref{eq:bayes-comp}, we note that the state with respect to which this functoriality is almost sure is $g\circ f\circ p$, for every state $p$ on $X$.
We therefore need to verify that 
\[\binom X X \xrightarrow{(f, f^\sharp)} \binom Y Y \xrightarrow{(g, g^\sharp)} \binom Z Z \quad \aeq_{g\circ f} \quad \binom X X \xrightarrow{\left(g\circ f, (g\circ f)^\sharp\right)} \binom Z Z \; .\]
This follows from three applications of Definition \ref{def:binv}:
\begin{align*}
& % https://varkor.github.io/tangle/?t=W1tbXV0sW1tbMCxbXV0sWzAsW11dLFswLFtdXSxbMCxbXV0sWzAsW11dLFswLFtdXSxbMCxbXV1dLFtbMCxbXV0sWzAsW11dLFswLFtdXSxbMixbMl1dLFsxLFswLDEsMCwxXV0sWzEsWzAsMSwwLDFdXSxbMSxbMCwwLDAsMV1dXSxbWzEsWzAsMSwwLDBdXSxbMSxbMCwxLDAsMV1dLFsxLFswLDEsMCwxXV0sWzEsWzEsMCwxLDFdXSxbMCxbXV0sWzAsW11dLFswLFtdXV0sW1swLFtdXSxbMCxbXV0sWzAsW11dLFsyLFsxXV0sWzEsWzAsMSwwLDFdXSxbMSxbMCwxLDAsMV1dLFsxLFswLDAsMCwxXV1dLFtbMCxbXV0sWzAsW11dLFswLFtdXSxbMCxbXV0sWzAsW11dLFswLFtdXSxbMCxbXV1dXSxbWzAsMC41LDIuNSxbInAiLDAsMV1dLFswLDEuNSwyLjUsWyJmIiwwLDFdXSxbMCwyLjUsMi41LFsiZyIsMCwxXV0sWzAsNSwzLjUsWyIoZ1xcY2lyYyBmKV5cXGRhZ19wIiwyLDFdXV1d
\begin{tangle}{(7,5)}[trim y]
	\tgBlank{(0,0)}{white}
	\tgBlank{(1,0)}{white}
	\tgBlank{(2,0)}{white}
	\tgBlank{(3,0)}{white}
	\tgBlank{(4,0)}{white}
	\tgBlank{(5,0)}{white}
	\tgBlank{(6,0)}{white}
	\tgBlank{(0,1)}{white}
	\tgBlank{(1,1)}{white}
	\tgBlank{(2,1)}{white}
	\tgBorderC{(3,1)}{1}{white}{white}
	\tgBorderA{(4,1)}{white}{white}{white}{white}
	\tgBorder{(4,1)}{0}{1}{0}{1}
	\tgBorderA{(5,1)}{white}{white}{white}{white}
	\tgBorder{(5,1)}{0}{1}{0}{1}
	\tgBorderA{(6,1)}{white}{white}{white}{white}
	\tgBorder{(6,1)}{0}{0}{0}{1}
	\tgBorderA{(0,2)}{white}{white}{white}{white}
	\tgBorder{(0,2)}{0}{1}{0}{0}
	\tgBorderA{(1,2)}{white}{white}{white}{white}
	\tgBorder{(1,2)}{0}{1}{0}{1}
	\tgBorderA{(2,2)}{white}{white}{white}{white}
	\tgBorder{(2,2)}{0}{1}{0}{1}
	\tgBorderA{(3,2)}{white}{white}{white}{white}
	\tgBorder{(3,2)}{1}{0}{1}{1}
	\tgBlank{(4,2)}{white}
	\tgBlank{(5,2)}{white}
	\tgBlank{(6,2)}{white}
	\tgBlank{(0,3)}{white}
	\tgBlank{(1,3)}{white}
	\tgBlank{(2,3)}{white}
	\tgBorderC{(3,3)}{0}{white}{white}
	\tgBorderA{(4,3)}{white}{white}{white}{white}
	\tgBorder{(4,3)}{0}{1}{0}{1}
	\tgBorderA{(5,3)}{white}{white}{white}{white}
	\tgBorder{(5,3)}{0}{1}{0}{1}
	\tgBorderA{(6,3)}{white}{white}{white}{white}
	\tgBorder{(6,3)}{0}{0}{0}{1}
	\tgBlank{(0,4)}{white}
	\tgBlank{(1,4)}{white}
	\tgBlank{(2,4)}{white}
	\tgBlank{(3,4)}{white}
	\tgBlank{(4,4)}{white}
	\tgBlank{(5,4)}{white}
	\tgBlank{(6,4)}{white}
	\tgCell[(0,1)]{(0,2)}{p}
	\tgCell[(0,1)]{(1,2)}{f}
	\tgCell[(0,1)]{(2,2)}{g}
	\tgCell[(2,1)]{(4.5,3)}{(g\circ f)^\sharp_p}
\end{tangle} \; = \; % https://varkor.github.io/tangle/?t=W1tbXV0sW1tbMCxbXV0sWzAsW11dLFswLFtdXSxbMCxbXV0sWzAsW11dXSxbWzAsW11dLFsyLFsyXV0sWzEsWzAsMSwwLDFdXSxbMSxbMCwxLDAsMV1dLFsxLFswLDAsMCwxXV1dLFtbMSxbMCwxLDAsMF1dLFsxLFsxLDAsMSwxXV0sWzAsW11dLFswLFtdXSxbMCxbXV1dLFtbMCxbXV0sWzIsWzFdXSxbMSxbMCwxLDAsMV1dLFsxLFswLDEsMCwxXV0sWzEsWzAsMCwwLDFdXV0sW1swLFtdXSxbMCxbXV0sWzAsW11dLFswLFtdXSxbMCxbXV1dXSxbWzAsMC41LDIuNSxbInAiLDAsMV1dLFswLDIuNSwxLjUsWyJmIiwwLDFdXSxbMCwzLjUsMS41LFsiZyIsMCwxXV1dXQ==
\begin{tangle}{(5,5)}[trim y]
	\tgBlank{(0,0)}{white}
	\tgBlank{(1,0)}{white}
	\tgBlank{(2,0)}{white}
	\tgBlank{(3,0)}{white}
	\tgBlank{(4,0)}{white}
	\tgBlank{(0,1)}{white}
	\tgBorderC{(1,1)}{1}{white}{white}
	\tgBorderA{(2,1)}{white}{white}{white}{white}
	\tgBorder{(2,1)}{0}{1}{0}{1}
	\tgBorderA{(3,1)}{white}{white}{white}{white}
	\tgBorder{(3,1)}{0}{1}{0}{1}
	\tgBorderA{(4,1)}{white}{white}{white}{white}
	\tgBorder{(4,1)}{0}{0}{0}{1}
	\tgBorderA{(0,2)}{white}{white}{white}{white}
	\tgBorder{(0,2)}{0}{1}{0}{0}
	\tgBorderA{(1,2)}{white}{white}{white}{white}
	\tgBorder{(1,2)}{1}{0}{1}{1}
	\tgBlank{(2,2)}{white}
	\tgBlank{(3,2)}{white}
	\tgBlank{(4,2)}{white}
	\tgBlank{(0,3)}{white}
	\tgBorderC{(1,3)}{0}{white}{white}
	\tgBorderA{(2,3)}{white}{white}{white}{white}
	\tgBorder{(2,3)}{0}{1}{0}{1}
	\tgBorderA{(3,3)}{white}{white}{white}{white}
	\tgBorder{(3,3)}{0}{1}{0}{1}
	\tgBorderA{(4,3)}{white}{white}{white}{white}
	\tgBorder{(4,3)}{0}{0}{0}{1}
	\tgBlank{(0,4)}{white}
	\tgBlank{(1,4)}{white}
	\tgBlank{(2,4)}{white}
	\tgBlank{(3,4)}{white}
	\tgBlank{(4,4)}{white}
	\tgCell[(0,1)]{(0,2)}{p}
	\tgCell[(0,1)]{(2,1)}{f}
	\tgCell[(0,1)]{(3,1)}{g}
\end{tangle} \\
&\qquad = % https://varkor.github.io/tangle/?t=W1tbXV0sW1tbXSxbXSxbXSxbMCxbXV0sWzAsW11dXSxbWzAsW11dLFswLFtdXSxbMixbMl1dLFsxLFswLDEsMCwxXV0sWzEsWzAsMSwwLDFdXSxbMSxbMCwwLDAsMV1dXSxbWzEsWzAsMSwwLDBdXSxbMSxbMCwxLDAsMV1dLFsxLFsxLDAsMSwxXV0sWzAsW11dLFswLFtdXV0sW1swLFtdXSxbMCxbXV0sWzIsWzFdXSxbMSxbMCwxLDAsMV1dLFsxLFswLDEsMCwxXV0sWzEsWzAsMCwwLDFdXV0sW1tdLFtdLFtdLFswLFtdXSxbMCxbXV1dXSxbWzAsMC41LDIuNSxbInAiLDAsMV1dLFswLDEuNSwyLjUsWyJmIiwwLDFdXSxbMCw0LDMuNSxbImZeXFxkYWdfcCIsMSwxXV0sWzAsNCwxLjUsWyJnIiwwLDFdXV1d
\begin{tangle}{(6,5)}[trim y]
	\tgBlank{(3,0)}{white}
	\tgBlank{(4,0)}{white}
	\tgBlank{(0,1)}{white}
	\tgBlank{(1,1)}{white}
	\tgBorderC{(2,1)}{1}{white}{white}
	\tgBorderA{(3,1)}{white}{white}{white}{white}
	\tgBorder{(3,1)}{0}{1}{0}{1}
	\tgBorderA{(4,1)}{white}{white}{white}{white}
	\tgBorder{(4,1)}{0}{1}{0}{1}
	\tgBorderA{(5,1)}{white}{white}{white}{white}
	\tgBorder{(5,1)}{0}{0}{0}{1}
	\tgBorderA{(0,2)}{white}{white}{white}{white}
	\tgBorder{(0,2)}{0}{1}{0}{0}
	\tgBorderA{(1,2)}{white}{white}{white}{white}
	\tgBorder{(1,2)}{0}{1}{0}{1}
	\tgBorderA{(2,2)}{white}{white}{white}{white}
	\tgBorder{(2,2)}{1}{0}{1}{1}
	\tgBlank{(3,2)}{white}
	\tgBlank{(4,2)}{white}
	\tgBlank{(0,3)}{white}
	\tgBlank{(1,3)}{white}
	\tgBorderC{(2,3)}{0}{white}{white}
	\tgBorderA{(3,3)}{white}{white}{white}{white}
	\tgBorder{(3,3)}{0}{1}{0}{1}
	\tgBorderA{(4,3)}{white}{white}{white}{white}
	\tgBorder{(4,3)}{0}{1}{0}{1}
	\tgBorderA{(5,3)}{white}{white}{white}{white}
	\tgBorder{(5,3)}{0}{0}{0}{1}
	\tgBlank{(3,4)}{white}
	\tgBlank{(4,4)}{white}
	\tgCell[(0,1)]{(0,2)}{p}
	\tgCell[(0,1)]{(1,2)}{f}
	\tgCell[(1,1)]{(3.5,3)}{f^\sharp_p}
	\tgCell[(0,1)]{(3.5,1)}{g}
\end{tangle} \; = \; % https://varkor.github.io/tangle/?t=W1tbXV0sW1tbMCxbXV0sWzAsW11dLFswLFtdXSxbMCxbXV0sWzAsW11dLFswLFtdXSxbMCxbXV0sWzAsW11dLFswLFtdXV0sW1swLFtdXSxbMCxbXV0sWzAsW11dLFsyLFsyXV0sWzEsWzAsMSwwLDFdXSxbMSxbMCwxLDAsMV1dLFsxLFswLDEsMCwxXV0sWzEsWzAsMSwwLDFdXSxbMSxbMCwwLDAsMV1dXSxbWzEsWzAsMSwwLDBdXSxbMSxbMCwxLDAsMV1dLFsxLFswLDEsMCwxXV0sWzEsWzEsMCwxLDFdXSxbMCxbXV0sWzAsW11dLFswLFtdXSxbMCxbXV0sWzAsW11dXSxbWzAsW11dLFswLFtdXSxbMCxbXV0sWzIsWzFdXSxbMSxbMCwxLDAsMV1dLFsxLFswLDEsMCwxXV0sWzEsWzAsMSwwLDFdXSxbMSxbMCwxLDAsMV1dLFsxLFswLDAsMCwxXV1dLFtbMCxbXV0sWzAsW11dLFswLFtdXSxbMCxbXV0sWzAsW11dLFswLFtdXSxbMCxbXV0sWzAsW11dLFswLFtdXV1dLFtbMCwwLjUsMi41LFsicCIsMCwxXV0sWzAsMS41LDIuNSxbImYiLDAsMV1dLFswLDIuNSwyLjUsWyJnIiwwLDFdXSxbMCw1LDMuNSxbImdeXFxzaGFycF97ZlxcY2lyYyBwfSIsMSwxXV0sWzAsNywzLjUsWyJmXlxcc2hhcnBfcCIsMSwxXV1dXQ==
\begin{tangle}{(9,5)}[trim y]
	\tgBlank{(0,0)}{white}
	\tgBlank{(1,0)}{white}
	\tgBlank{(2,0)}{white}
	\tgBlank{(3,0)}{white}
	\tgBlank{(4,0)}{white}
	\tgBlank{(5,0)}{white}
	\tgBlank{(6,0)}{white}
	\tgBlank{(7,0)}{white}
	\tgBlank{(8,0)}{white}
	\tgBlank{(0,1)}{white}
	\tgBlank{(1,1)}{white}
	\tgBlank{(2,1)}{white}
	\tgBorderC{(3,1)}{1}{white}{white}
	\tgBorderA{(4,1)}{white}{white}{white}{white}
	\tgBorder{(4,1)}{0}{1}{0}{1}
	\tgBorderA{(5,1)}{white}{white}{white}{white}
	\tgBorder{(5,1)}{0}{1}{0}{1}
	\tgBorderA{(6,1)}{white}{white}{white}{white}
	\tgBorder{(6,1)}{0}{1}{0}{1}
	\tgBorderA{(7,1)}{white}{white}{white}{white}
	\tgBorder{(7,1)}{0}{1}{0}{1}
	\tgBorderA{(8,1)}{white}{white}{white}{white}
	\tgBorder{(8,1)}{0}{0}{0}{1}
	\tgBorderA{(0,2)}{white}{white}{white}{white}
	\tgBorder{(0,2)}{0}{1}{0}{0}
	\tgBorderA{(1,2)}{white}{white}{white}{white}
	\tgBorder{(1,2)}{0}{1}{0}{1}
	\tgBorderA{(2,2)}{white}{white}{white}{white}
	\tgBorder{(2,2)}{0}{1}{0}{1}
	\tgBorderA{(3,2)}{white}{white}{white}{white}
	\tgBorder{(3,2)}{1}{0}{1}{1}
	\tgBlank{(4,2)}{white}
	\tgBlank{(5,2)}{white}
	\tgBlank{(6,2)}{white}
	\tgBlank{(7,2)}{white}
	\tgBlank{(8,2)}{white}
	\tgBlank{(0,3)}{white}
	\tgBlank{(1,3)}{white}
	\tgBlank{(2,3)}{white}
	\tgBorderC{(3,3)}{0}{white}{white}
	\tgBorderA{(4,3)}{white}{white}{white}{white}
	\tgBorder{(4,3)}{0}{1}{0}{1}
	\tgBorderA{(5,3)}{white}{white}{white}{white}
	\tgBorder{(5,3)}{0}{1}{0}{1}
	\tgBorderA{(6,3)}{white}{white}{white}{white}
	\tgBorder{(6,3)}{0}{1}{0}{1}
	\tgBorderA{(7,3)}{white}{white}{white}{white}
	\tgBorder{(7,3)}{0}{1}{0}{1}
	\tgBorderA{(8,3)}{white}{white}{white}{white}
	\tgBorder{(8,3)}{0}{0}{0}{1}
	\tgBlank{(0,4)}{white}
	\tgBlank{(1,4)}{white}
	\tgBlank{(2,4)}{white}
	\tgBlank{(3,4)}{white}
	\tgBlank{(4,4)}{white}
	\tgBlank{(5,4)}{white}
	\tgBlank{(6,4)}{white}
	\tgBlank{(7,4)}{white}
	\tgBlank{(8,4)}{white}
	\tgCell[(0,1)]{(0,2)}{p}
	\tgCell[(0,1)]{(1,2)}{f}
	\tgCell[(0,1)]{(2,2)}{g}
	\tgCell[(1,1)]{(4.5,3)}{g^\sharp_{f\circ p}}
	\tgCell[(1,1)]{(6.5,3)}{f^\sharp_p}
\end{tangle}
\end{align*}
\end{proof}
Note that although Markov categories in general may not admit all Bayesian inverses, we can always restrict to a wide subcategory $\Ca^\dag$ consisting of only those kernels which admit inverses.

\section{Dependent Bayesian Lenses}
Those familiar with categorical semantics of type theory might observe that the construction of Bayesian lenses is similar to that of the \emph{families fibration} \cite[\S1.2]{Jacobs1999Categorical}.
This construction, commonly used in the interpretation of dependent types, constructs a category whose objects and morphisms are set-indexed families of objects and morphisms from another underlying category.
As a fibration this category projects onto the category of sets, by picking out the indexing set or reindexing function under each family.

\begin{definition}[{The families fibration \cite[\S1.2]{Jacobs1999Categorical}}]
The indexed category of families over a category $\Ca$ is the functor $\Fam_\Ca : \Set\op \to \Cat$ defined as so:
\begin{itemize}
\item For a set $X$, $\Fam_\Ca(X)$ is the category whose objects are $X$-indexed families of objects $X \to \Ob(\Ca)$ and whose morphisms  $A \to B$ are families of morphisms $\phi : \{x \in X\} \to \Ca(A_x, B_x)$	.
\item For a function $\alpha : Y \to X$, the induced reindexing functor $\alpha^* : \Fam_\Ca(X) \to \Fam_\Ca(Y)$ is given by pre-composition with alpha:
	\[
		\alpha^*(A) : X \overset{\alpha}\longrightarrow Y
			\overset{A}\longrightarrow \Ob(\Ca)\\
	\]
	\[
		\alpha^*(\phi) : \{y \in Y\} \overset{\alpha}\longrightarrow \{\alpha(x) \in X\}
			\overset{\phi}\longrightarrow \Ca\left(A_{\alpha(y)}, B_{\alpha(y)}\right)
	\]
\end{itemize}
\end{definition}
Alternatively, viewing sets as discrete categories, $\Fam_\Ca$ is the functor that sends $X$ to the functor category $\Cat(X, \Ca)$ and reindexes by precomposition with the function viewed as a functor between discrete categories.

A crucial difference between this and the indexed category in \cref{def:stat-cat} is that the latter is indexed only by sets of the form $\Ca(I, X)$ whereas $\Fam_\Ca(-)$ allows for arbitrary indexing sets.
Having the indexing sets take the form $\Ca(I, -)$ allows us to additionally restrict the reindexing functors to those represented by kernels of $\Ca$.
Formally we can see this as the result of reindexing $\Fam_\Ca(-)$ itself with the state functor:

\begin{definition}[State-indexed families]
The indexed category $\StFam_\Ca : \Ca\op \to \Cat$ is given by the composite of functors:
% https://q.uiver.app/?q=WzAsOSxbMCwwLCJcXENhIl0sWzEsMCwiXFxTZXRcXG9wIl0sWzIsMCwiXFxDYXQiXSxbMCwxLCJBIl0sWzAsMiwiQiJdLFsxLDEsIlxcQ2EoSSwgQSkiXSxbMSwyLCJcXENhKEksIEIpIl0sWzIsMSwiXFxDYV57XFxDYShJLCBBKX0iXSxbMiwyLCJcXENhXntcXENhKEksIEIpfSJdLFsxLDIsIlxcRmFtX1xcQ2EiXSxbMCwxLCJcXENhKEksIC0pIl0sWzMsNCwiZiIsMl0sWzUsNiwiZiBcXGNpcmMgLSJdLFs4LDcsIlxcYWxwaGEgXFxtYXBzdG8gXFxhbHBoYShmIFxcY2lyYyAtKSIsMl1d
\[\begin{tikzcd}[ampersand replacement=\&,row sep=scriptsize]
	\Ca\op \& \Set\op \& \Cat \\
	A \& {\Ca(I, A)} \& {\Ca^{\Ca(I, A)}} \\
	B \& {\Ca(I, B)} \& {\Ca^{\Ca(I, B)}}
	\arrow["{\Fam_\Ca}", from=1-2, to=1-3]
	\arrow["{\Ca(I, -)\op}", from=1-1, to=1-2]
	\arrow["f"', from=2-1, to=3-1]
	\arrow["{f \circ -}", from=2-2, to=3-2]
	\arrow["{\alpha \mapsto \alpha(f \circ -)}"', from=3-3, to=2-3]
\end{tikzcd}\]
This has as objects $\Ca(I, X)$-indexed families of objects from $\Ca$ and as morphisms $\Ca(I, X)$-indexed families of kernels from $\Ca$.
\end{definition}

Note that the categories indexed by $\StFam$ are more general than those indexed by $\Stat$:
the objects of $\StFam_\Ca(X)$ are whole functions $\Ca(I, X) \to \Ob(\Ca)$, whereas an object of $\Stat_\Ca(X)$ is merely a single object of $\Ca$.

\begin{proposition}
$\Stat_\Ca$ is a subfunctor of $\StFam_\Ca$ given by restricting to subcategories which consist only of constantly-indexed families of objects i.e. those families $\Ca(I, X) \to \Ob(\Ca)$ which are constant functions.
\qed
\end{proposition}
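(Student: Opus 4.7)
The plan is to exhibit a natural transformation $\iota : \Stat_\Ca \Rightarrow \StFam_\Ca$ whose components are faithful functors that identify $\Stat_\Ca(X)$ with the full subcategory of $\StFam_\Ca(X)$ spanned by the constant families, and then check that this is compatible with reindexing along arbitrary kernels $f : X \to Y$.

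First I would define the component $\iota_X : \Stat_\Ca(X) \to \StFam_\Ca(X)$ at each object $X$ of $\Ca$. On objects, send $A \in \Ob(\Ca)$ to the constant family $\mathrm{const}_A : \Ca(I,X) \to \Ob(\Ca)$, $p \mapsto A$. On morphisms, observe that a $\Stat_\Ca(X)$-morphism $A \to B$ is by definition a function $\sigma : \Ca(I,X) \to \Ca(A,B)$, which is literally the same data as a $\StFam_\Ca(X)$-morphism $\mathrm{const}_A \to \mathrm{const}_B$, namely a family $\{p \in \Ca(I,X)\} \to \Ca(\mathrm{const}_A(p), \mathrm{const}_B(p)) = \Ca(A, B)$. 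Hence $\iota_X$ can be taken to be the identity on morphisms under this identification, and the functoriality of $\iota_X$ (preserving identities and pointwise composition) is inherited directly from the pointwise definitions on both sides. By construction $\iota_X$ is injective on objects and fully faithful, so its image is precisely the full subcategory of $\StFam_\Ca(X)$ on the constant families.

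Next I would verify naturality in $X$: for every kernel $f : X \to Y$ in $\Ca$, the square
\[
\begin{tikzcd}[ampersand replacement=\&]
\Stat_\Ca(Y) \ar[r, "\iota_Y"] \ar[d, "f^*"'] \& \StFam_\Ca(Y) \ar[d, "f^*"] \\
\Stat_\Ca(X) \ar[r, "\iota_X"'] \& \StFam_\Ca(X)
\end{tikzcd}
\]
commutes on the nose. On objects, the reindexing $f^*$ in $\Stat_\Ca$ is the identity on objects, so going down then right gives $\mathrm{const}_A$; going right then down applies $\Fam_\Ca$-reindexing to $\mathrm{const}_A$, which precomposes with $f \circ {-}$ and yields $\mathrm{const}_A$ again. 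On morphisms, both paths send $\sigma : \Ca(I, Y) \to \Ca(A, B)$ to the function $p \mapsto \sigma(f \circ p)$, which is exactly the reindexing prescription given in \cref{def:stat-cat} and in the definition of $\StFam_\Ca$. Hence $\iota$ is a natural transformation whose components are embeddings of subcategories, so $\Stat_\Ca$ is a subfunctor of $\StFam_\Ca$ with the claimed image.

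There is no real obstacle here: the proof is essentially a matter of unwinding definitions and noting that the only difference between the two constructions is that $\Stat_\Ca(X)$ restricts to families whose object-part is constant, while the morphism-part and the reindexing prescription already agree by construction. The one point that deserves a single sentence of explicit mention in the write-up is that reindexing in $\StFam_\Ca$ preserves constant families, which is immediate because precomposing a constant function with $f \circ {-}$ leaves it constant.
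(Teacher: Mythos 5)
Your proof is correct: the paper treats this proposition as immediate (it is stated with no proof beyond a \qed), and the definitional unwinding you give --- identifying $\Stat_\Ca(X)$ with the full subcategory of constant families via a fully faithful $\iota_X$, and checking that both reindexings act by precomposition with $f \circ {-}$ so the naturality squares commute strictly --- is exactly the argument the authors are implicitly relying on. The one point you rightly flag, that reindexing in $\StFam_\Ca$ preserves constant families, is the only thing that needs saying, and you say it.
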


Observing that $\StFam$ is a generalisation of $\Stat$, we are led to consider whether the fibration constructed from $\StFam$ could profitably be considered as a generalised category of lenses.
By analogy with the use of $\Fam$ in dependent type theory we refer to these as \emph{dependent Bayesian lenses}.

\begin{definition}[Dependent Bayesian Lenses]
We define the category of \emph{dependent Bayesian lenses} over $\Ca$ to be the Grothendieck construction of the fibrewise opposite of $\StFam$,
\[\DBLens(\Ca) = \coprod_{X \in C} \StFam_\Ca(X)\op \; . \]
\end{definition}
Unpacking this definition we have that:
\begin{itemize}
\item Objects of $\DBLens(\Ca)$ are pairs $\binom X A$ where $X \in \Ca$ and $A : \Ca(I, X) \to \Ob(\Ca)$. We think of this as representing a set of dependent pairs, whose elements are 
$\left\langle p, a \right\rangle \in \Ca(I, X) \times A(p)$.
\item Morphisms $\binom X A \to \binom Y B$ are pairs $(f, f^\sharp)$ of a kernel $f : X \to Y$ and a family of kernels 
$f^\sharp : \left\{p \in \Ca(I, X)\right\} \to \Ca(B(f \circ p), A(p))$.
\item The composition is as with the non-dependent version in \cref{def:blens}. Aside from the fact that $f^\sharp$ is here considered as a dependent function, the data of a morphism is the same. It is straightforward to verify that the extra dependent typing data still aligns where appropriate.
\end{itemize}

\begin{remark}
	\label{remark:monoidal-structure}
	The functor $\Ca(I, -)\op : \Ca\op \to \Set\op$ can be made into a lax monoidal functor with structure maps induced by functions $\Ca(I, X \otimes Y) \to \Ca(I, X) \times \Ca(I, Y)$ which map states to pairs of states obtained by deleting either variable.
	Then it follows by results in \cite{moeller2021monoidal} that $\StFam$ is the composition of lax monoidal functors, and so its Grothendieck construction inherits a monoidal structure.
\end{remark}

\section{Support Objects}
Giving an abstract account of Bayes' law, Definition \ref{def:binv} promises to be very important in studying Bayesian statistics synthetically, but it is in some way unsatisfying because it only specifies a morphism up to almost-sure equality.
For example, considering $\mathbf{FinStoch}$, if a distribution $p: I \to X$ is not fully supported, then the inverse of a kernel $X \to Y$ is only specified by the above definition at the points in the support of $p$.
We can work around this ambiguity however by instead considering inverses as kernels between objects representing the supports of distributions.

Fritz \cite{Fritz_2020} proposes a definition for support objects in a Markov category, but does not develop the idea further.
Here we investigate some properties of support objects and explain how they can be used to enhance the theory of abstract Bayesian inversion.

\begin{definition}
Fix a state $p: I \to X$. An object $X_p$ is called a \emph{support of $p$} if $X_p$ represents the covariant functor $(\Ca(X, -) / \aeq_p): \Ca \to \Set$.
\end{definition}

This definition succinctly captures the essential properties of the support of a distribution, but it is quite opaque and does not encourage intuition.
If an object is a support of another object, intuitively it should behave as a subspace, having an inclusion map which satisfies some extra properties. 
We state this formally in terms of the existence of certain section-retraction kernels representing this inclusion.

\begin{restatable}{proposition}{supportObjectsThm}
\label{thm:support-objects}
$X_p$ is a support of $p: I \to X$ if and only if there is a section-retraction pair $X_p \xrightarrow{i} X \xrightarrow{r} X_p$ such that for any kernels $f, g: X \to Y$ we have $f \aeq_p g \iff f\circ i = g \circ i$.
\end{restatable}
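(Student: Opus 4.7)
The plan is to turn the representability definition into concrete data via the Yoneda lemma, and conversely show that such concrete data suffices to exhibit the representing object. Throughout I write $\theta_Y : \Ca(X_p, Y) \xrightarrow{\sim} \Ca(X, Y)/\aeq_p$ for the natural isomorphism exhibiting $X_p$ as a representing object.

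For the forward direction, I would begin by extracting the universal element. By Yoneda, the natural iso is determined by a class $[r] \in \Ca(X, X_p)/\aeq_p$, corresponding to $\theta_{X_p}(\mathrm{id}_{X_p})$; fix a representative $r : X \to X_p$. Naturality of $\theta$ then forces $\theta_Y(\phi) = [\phi \circ r]$ for every $Y$ and every $\phi : X_p \to Y$. Applying $\theta_X^{-1}$ to $[\mathrm{id}_X] \in \Ca(X,X)/\aeq_p$ yields a kernel $i : X_p \to X$ with $i \circ r \aeq_p \mathrm{id}_X$. To obtain the strict equation $r \circ i = \mathrm{id}_{X_p}$, compute $\theta_{X_p}(r \circ i) = [(r \circ i) \circ r] = [r \circ (i \circ r)] = [r] = \theta_{X_p}(\mathrm{id}_{X_p})$, where the middle equality uses that post-composing with $r$ preserves $\aeq_p$; injectivity of $\theta_{X_p}$ then gives $r \circ i = \mathrm{id}_{X_p}$. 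Finally, for the equivalence $f \aeq_p g \iff f \circ i = g \circ i$: observe that $\theta_Y(f \circ i) = [f \circ i \circ r] = [f]$ using $i \circ r \aeq_p \mathrm{id}_X$, and likewise $\theta_Y(g \circ i) = [g]$, so the equivalence follows from $\theta_Y$ being a bijection.

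For the converse, assume the section-retraction pair $(i,r)$ with the stated property. Define $\theta_Y : \Ca(X_p, Y) \to \Ca(X, Y)/\aeq_p$ by $\theta_Y(\phi) = [\phi \circ r]$; naturality in $Y$ is immediate from associativity of composition. Injectivity: if $\phi \circ r \aeq_p \psi \circ r$, then by hypothesis $\phi \circ r \circ i = \psi \circ r \circ i$, and using $r \circ i = \mathrm{id}_{X_p}$ this collapses to $\phi = \psi$. Surjectivity: given $f : X \to Y$, set $\phi \defeq f \circ i$; then $\phi \circ r = f \circ i \circ r$, and the stated property lets us verify $f \circ i \circ r \aeq_p f$ by post-composing with $i$ on the right and using $r \circ i = \mathrm{id}_{X_p}$. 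Hence $\theta_Y$ is a natural isomorphism and $X_p$ represents $\Ca(X,-)/\aeq_p$.

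The main subtlety to watch is the interaction between strict equality and $\aeq_p$-equality: the forward direction needs the strict identity $r \circ i = \mathrm{id}_{X_p}$ (not merely up to $\aeq_p$) in order for the biconditional $f \aeq_p g \iff f \circ i = g \circ i$ to have the right strength, and the converse direction needs exactly this strict identity to verify injectivity and surjectivity of $\theta_Y$. Both cases turn on the fact that post-composition preserves $\aeq_p$, which is a standard property of almost-sure equality; I would state this once as a lemma at the start of the proof rather than repeatedly justify it.
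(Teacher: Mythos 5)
Your proof is correct and follows essentially the same route as the paper's: both directions hinge on Yoneda-extracting the universal element, identifying the natural isomorphism with (pre-)composition along the section/retraction, and using the strict identity $r \circ i = \id_{X_p}$ together with $i \circ r \aeq_p \id_X$. The only cosmetic difference is that you work with the inverse transformation $\theta = \Phi^{-1}$ and extract $r$ first (deriving $r \circ i = \id_{X_p}$ via injectivity of $\theta_{X_p}$), whereas the paper extracts $i = \Phi_X(\id_X)$ first and gets the retraction equation in one line.
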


\begin{proof}
Assume $X_p$ is a support. This means we have a natural isomorphism 
$\Phi: (\Ca(X, -)/\aeq_p) \to \Ca(X_p, -) $.
We take $i = \Phi_X(\id_X)$, then we see from the following naturality square that the action of $\Phi$ must be to pre-compose representative morphisms with $i$:
% https://q.uiver.app/?q=WzAsOCxbMSwxLCJcXENhKFhfcCwgWCkiXSxbMiwxLCJcXENhKFgsIFgpL1xcYWVxX3AiXSxbMSwyLCJcXENhKFhfcCwgWSkiXSxbMiwyLCJcXENhKFgsIFkpL1xcYWVxX3AiXSxbMywwLCJbXFxpZF9YXV97XFxhZXFfcH0iLFswLDAsNTEsMV1dLFszLDMsIltmXV97XFxhZXFfcH0iLFswLDAsNTAsMV1dLFswLDAsImkiLFswLDAsNTAsMV1dLFswLDMsImYgXFxjaXJjIGkiLFswLDAsNTAsMV1dLFsxLDMsIlxcQ2EoWCwgZikvXFxhZXFfcCJdLFswLDIsIlxcQ2EoWF9wLCBmKSIsMl0sWzEsMCwiXFxQaGlfWCIsMl0sWzMsMiwiXFxQaGlfWSJdLFs0LDUsIiIsMCx7InNob3J0ZW4iOnsic291cmNlIjoyMCwidGFyZ2V0IjoyMH0sImNvbG91ciI6WzAsMCw1MF0sInN0eWxlIjp7InRhaWwiOnsibmFtZSI6Im1hcHMgdG8ifX19XSxbNSw3LCIiLDAseyJzaG9ydGVuIjp7InNvdXJjZSI6MjAsInRhcmdldCI6MjB9LCJjb2xvdXIiOlswLDAsNTBdLCJzdHlsZSI6eyJ0YWlsIjp7Im5hbWUiOiJtYXBzIHRvIn19fV0sWzYsNywiIiwyLHsic2hvcnRlbiI6eyJzb3VyY2UiOjIwLCJ0YXJnZXQiOjIwfSwiY29sb3VyIjpbMCwwLDUwXSwic3R5bGUiOnsidGFpbCI6eyJuYW1lIjoibWFwcyB0byJ9fX1dLFs0LDYsIiIsMix7InNob3J0ZW4iOnsic291cmNlIjoxMCwidGFyZ2V0IjoyMH0sImNvbG91ciI6WzAsMCw1MF0sInN0eWxlIjp7InRhaWwiOnsibmFtZSI6Im1hcHMgdG8ifX19XV0=
\[\begin{tikzcd}[ampersand replacement=\&]
	\textcolor{rgb,255:red,128;green,128;blue,128}{i} \&\&\& \textcolor{rgb,255:red,130;green,130;blue,130}{[\id_X]_{\aeq_p}} \\
	\& {\Ca(X_p, X)} \& {\Ca(X, X)/\aeq_p} \\
	\& {\Ca(X_p, Y)} \& {\Ca(X, Y)/\aeq_p} \\
	\textcolor{rgb,255:red,128;green,128;blue,128}{f \circ i} \&\&\& \textcolor{rgb,255:red,128;green,128;blue,128}{[f]_{\aeq_p}}
	\arrow["{\Ca(X, f)/\aeq_p}", from=2-3, to=3-3]
	\arrow["{\Ca(X_p, f)}"', from=2-2, to=3-2]
	\arrow["{\Phi_X}"', from=2-3, to=2-2]
	\arrow["{\Phi_Y}", from=3-3, to=3-2]
	\arrow[draw={rgb,255:red,128;green,128;blue,128}, shorten <=11pt, shorten >=11pt, maps to, from=1-4, to=4-4]
	\arrow[draw={rgb,255:red,128;green,128;blue,128}, shorten <=30pt, shorten >=30pt, maps to, from=4-4, to=4-1]
	\arrow[draw={rgb,255:red,128;green,128;blue,128}, shorten <=11pt, shorten >=11pt, maps to, from=1-1, to=4-1]
	\arrow[draw={rgb,255:red,128;green,128;blue,128}, shorten <=14pt, shorten >=29pt, maps to, from=1-4, to=1-1]
\end{tikzcd}\]

Hence we establish the property that pre-composition by $i$ is an isomorphism between kernels from $X$ and $\aeq_p$-equivalences classes of kernels from $X_p$.
We further have that $\id_{X_p} = \Phi(\Phi\inv(\id_{X_p})) = \Phi\inv(\id_{X_p})\circ i$, so we can take the retract to be $r = \Phi\inv(\id)$.

Conversely, given such an $i$ and $r$, it is clear that pre-composition by $i$ defines a function $\Ca(X, Y) \to \Ca(X_p, Y)$ natural in $Y$ and the assumed property of $i$ guarantees that this is a bijection from $\aeq_p$-equivalence classes.
Finally we have that $i \circ r \circ i = i$, so $i \circ r \aeq_p \id_{X_p}$.
Hence pre-composition by $r$ is an inverse to $(-) \circ i: (\Ca(X, -)/\aeq_p) \to \Ca(X_p, -)$.
\end{proof}

While support objects are not necessarily unique, they must be unique up to isomorphism, since two support objects for the same distribution must by definition represent the same presheaf.
When we disuss a given support object we therefore really mean a given support object along with a choice of section and retraction.

Now, if we have a distribution on $X$, $p: I \to X$, and a kernel $f: X \to Y$ we can push $p$ forward to a distribution $f \circ p$ on $Y$.
So $f$ restricts to a kernel $\restr f p = r\circ f \circ i: X_{p} \to Y_{f \circ p}$.
Dually we have an inclusion of kernels $g : X_p \to Y_q$ into $\incl g = i \circ g \circ r: X \to Y$.
Using these there is an obvious adjustment to the definition of Bayesian inversion in order to capture Bayesian inverses between supports:

\begin{definition}[Bayesian-inverse-with-support]
Fix kernels $p: I \to X$ and $f: X \to Y$ with support objects $X_p$ and $Y_{f \circ p}$.
We call a kernel $f^\sharp_p: Y_{f \circ p} \to X_p$  a \emph{Bayesian inverse with support} if $\incl{f^\sharp_p}$ is an ordinary Bayesian inverse for $f$ at $p$.
\end{definition}

This is very similar to the previous definition.
However the presence of support objects in $f^\sharp_p$'s domain and codomain mean that the kernel is more canonical than an ordinary Bayesian inverse.
In fact for any ordinary inverse $f^\sharp_p$, its restriction to the distribution $f \circ p$ is an inverse-with-support. 

\begin{restatable}[Unique Bayesian Inversion]{theorem}{uniqueInversionThm}
\label{thm:unique-inversion}
Fix kernels $f: X \to Y$ and $p: I \to X$, and support objects $X_p$ and $Y_{f \circ p}$.
If $f$ has an ordinary Bayesian inverse at $p$, then there is a unique inverse-with-support to $f$ at $p$.
\end{restatable}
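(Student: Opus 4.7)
The plan is to prove existence by taking any given ordinary Bayesian inverse $g: Y \to X$ of $f$ at $p$ and restricting it to the supports, setting $f^\sharp_p \defeq r_X \circ g \circ i_Y$, where $i_X, r_X$ and $i_Y, r_Y$ denote the section--retraction pairs supplied by Proposition~\ref{thm:support-objects} for $p$ and $f \circ p$ respectively. Uniqueness will then follow by exploiting that $i_X$, being a split monomorphism, is cancellable on the left.

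For existence, the task is to verify that $\incl{f^\sharp_p} = i_X \circ r_X \circ g \circ i_Y \circ r_Y$ satisfies Definition~\ref{def:binv}. Being a Bayesian inverse of $f$ at $p$ is invariant under $\aeq_{f\circ p}$-equivalence, since the defining equation depends only on the joint with $f\circ p$, so it suffices to show $\incl{f^\sharp_p} \aeq_{f\circ p} g$. I would do this in two steps. First, Proposition~\ref{thm:support-objects} together with $r_Y \circ i_Y = \id_{Y_{f\circ p}}$ yields $i_Y \circ r_Y \aeq_{f\circ p} \id_Y$; since almost-sure equality is preserved under post-composition, this gives $i_X \circ r_X \circ g \circ i_Y \circ r_Y \aeq_{f\circ p} i_X \circ r_X \circ g$. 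Second, to show $i_X \circ r_X \circ g \aeq_{f\circ p} g$, I rewrite the joint $(g \otimes \id_Y) \circ \dup_Y \circ f \circ p$ via the Bayesian inverse equation as $(\id_X \otimes f) \circ \dup_X \circ p$, then apply $i_X \circ r_X \aeq_p \id_X$ on the $X$-wire (now governed by the prior $p$), and finally reverse the same rewrite to land back at $(g \otimes \id_Y) \circ \dup_Y \circ f \circ p$.

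For uniqueness, suppose $h_1, h_2 : Y_{f\circ p} \to X_p$ are both inverses-with-support. Then $\incl{h_1}$ and $\incl{h_2}$ are both ordinary Bayesian inverses of $f$ at $p$, so $\incl{h_1} \aeq_{f\circ p} \incl{h_2}$. Applying Proposition~\ref{thm:support-objects} at the prior $f\circ p$ with inclusion $i_Y$ converts this into $\incl{h_1} \circ i_Y = \incl{h_2} \circ i_Y$, and since $r_Y \circ i_Y = \id$ this simplifies to $i_X \circ h_1 = i_X \circ h_2$. Post-composing with $r_X$ and using $r_X \circ i_X = \id$ then gives $h_1 = h_2$. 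The main obstacle is the second step of existence, where bookkeeping of the prior with respect to which each almost-sure equality holds is delicate: almost-sure equality is not in general preserved under pre-composition, and the key manoeuvre is to use the Bayesian inverse equation itself to translate the ambient prior from $f\circ p$ on $Y$ back to $p$ on $X$, where the support relation $i_X \circ r_X \aeq_p \id_X$ becomes directly applicable.
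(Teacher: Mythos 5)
Your proposal is correct and follows essentially the same route as the paper: existence by restricting an ordinary inverse to the supports and verifying the Bayesian-inverse equation via $i \circ r \aeq \id$ together with the "transfer the prior from $f\circ p$ back to $p$" manoeuvre, and uniqueness from almost-sure uniqueness of ordinary inverses combined with the cancellation property of the inclusion. The paper merely packages this as a bijection between inverses-with-support and $\aeq_{f\circ p}$-classes of ordinary inverses (Proposition~\ref{bayesian_inverse_bijection}), from which the theorem is a corollary.
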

\begin{proof}
See \cref{appendix:support-calculus}.	
\end{proof}

This final result suggests that we can now define a functor that picks out the canonical inverse for a given kernel.
Recall that in \cref{thm:binv-functorial} we were almost able to show that Bayesian inversion defines a functor $\Ca \to \BLens(\Ca)$ except that the functoriality constraint only holds up to almost-equality. 
Using support objects we improve this into strict functoriality between kernels. Bayesian inversion with supports departs from the previous situation slightly in that the domain and codomain of the inverse kernels vary as the indexing distribution varies. Fortunately this is exactly the extra level of generality afforded us by dependent Bayesian lenses.

\begin{proposition}
\label{thm:exact-inversion}
If $\Ca$ has Bayesian inverses for every kernel, and support objects for every distribution $I \to X$ then the fibred category $\DBLens(\Ca)$ has a section $T : \Ca \to \DBLens(\Ca)$ sending kernels to families of their Bayesian inverses between support objects.
\end{proposition}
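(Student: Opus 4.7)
The plan is to define $T$ explicitly and then use Theorem \ref{thm:unique-inversion} to promote the almost-sure functoriality of Proposition \ref{thm:binv-functorial} into strict functoriality. First I would fix, for each $X \in \Ca$ and each state $p \in \Ca(I, X)$, a choice of support object $X_p$ with its section $i_p$ and retraction $r_p$ as in Proposition \ref{thm:support-objects}. On objects, set $T(X) \defeq \binom{X}{A_X}$, where $A_X : \Ca(I, X) \to \Ob(\Ca)$ is the family $p \mapsto X_p$. On a morphism $f : X \to Y$, set $T(f) \defeq (f, f^\sharp)$ where $f^\sharp_p : Y_{f \circ p} \to X_p$ is the unique Bayesian inverse-with-support of $f$ at $p$ provided by Theorem \ref{thm:unique-inversion}. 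Types match because $A_Y(f \circ p) = Y_{f \circ p}$, so $(f, f^\sharp)$ is a well-formed morphism $\binom{X}{A_X} \to \binom{Y}{A_Y}$ in $\DBLens(\Ca)$, and $T$ is manifestly a section of the fibration projection since its first component is the identity on $\Ca$.

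Functoriality on identities is straightforward: $\id_X$ is an ordinary Bayesian inverse of itself at every $p$, and $\incl{\id_{X_p}} = i_p \circ r_p$ is $p$-almost-surely equal to $\id_X$ by Proposition \ref{thm:support-objects}. By the uniqueness half of Theorem \ref{thm:unique-inversion}, we must therefore have $(\id_X)^\sharp_p = \id_{X_p}$, which is exactly the identity on $T(X)$ in $\DBLens(\Ca)$.

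The substantive step is composition. Given $X \xrightarrow{f} Y \xrightarrow{g} Z$ and $p : I \to X$, what must be shown is that the $\DBLens$-composite $f^\sharp_p \circ g^\sharp_{f \circ p} : Z_{g \circ f \circ p} \to X_p$ agrees with the inverse-with-support $(g \circ f)^\sharp_p$. By Proposition \ref{thm:binv-functorial}, the inclusion $\incl{f^\sharp_p} \circ \incl{g^\sharp_{f \circ p}}$ is an ordinary Bayesian inverse of $g \circ f$ at $p$. Expanding $\incl{f^\sharp_p} = i_p \circ f^\sharp_p \circ r_{f \circ p}$ and $\incl{g^\sharp_{f \circ p}} = i_{f \circ p} \circ g^\sharp_{f \circ p} \circ r_{g \circ f \circ p}$, and then conjugating by $r_p \circ (-) \circ i_{g \circ f \circ p}$ to restrict to support objects, the three occurrences of $r \circ i$ that appear are all \emph{strict} identities (since section-retraction is a strict equation), leaving precisely $f^\sharp_p \circ g^\sharp_{f \circ p}$. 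Uniqueness in Theorem \ref{thm:unique-inversion} then identifies this restriction with $(g \circ f)^\sharp_p$, completing functoriality.

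The main subtlety, and the reason the whole construction demands support objects, is exactly the collapse $r \circ i = \id$: without it, the middle $Y$-factor would only cancel up to $(f \circ p)$-almost-sure equality and the argument would degenerate back into Proposition \ref{thm:binv-functorial}. Support objects quotient out this slack at the level of the type, so the dependent structure of $\DBLens(\Ca)$ — where $g^\sharp$ is reindexed at $f \circ p$ rather than at $p$ — meshes on the nose with the change of support induced by pushing $p$ forward along $f$.
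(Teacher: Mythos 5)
Your proposal is correct and follows essentially the same route as the paper: define $T$ explicitly via a chosen family of support objects, and let the uniqueness of inverses-with-support (Theorem \ref{thm:unique-inversion}) upgrade the almost-sure functoriality of Proposition \ref{thm:binv-functorial} to strict functoriality, with the key computation being the strict collapse of the middle $r_{f\circ p} \circ i_{f\circ p}$. The only cosmetic difference is that the paper observes $\incl{f^\sharp_p}\circ\incl{g^\sharp_{f\circ p}} = \incl{f^\sharp_p\circ g^\sharp_{f\circ p}}$ directly and concludes by the definition of inverse-with-support, whereas you conjugate by restriction and inclusion and invoke the fact that restrictions of ordinary inverses are inverses-with-support; you also add the (omitted but harmless) check on identities.
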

\begin{proof}
We write explicitly the mapping defining $T$.
On objects $T(X) = \binom X {S_X}$ where $S_X : \Ca(I, X) \to \Ob(\Ca)$ maps $p$ to a choice of support object $X_p$.
On kernels $f : X \to Y$, we have $T(X) = (f, f^\sharp) : \binom X {S_X} \to \binom Y {S_Y}$ where for each $p \in \Ca(I, X)$, $f^\sharp_p : Y_{f \circ p} \to X_p$ is given by the Bayesian inverse with support of $f$ at $p$.

The functoriality of $T$ follows similarly to in \cref{thm:binv-functorial}.
Briefly, if $f$ and $g$ are composable Bayesian inverses with support, then $\incl f$ and $\incl g$ are composable ordinary inverses, so by \cref{thm:binv-functorial} their composition must be an inverse. By the definition of the inclusions it is immediately that $\incl f \circ \incl g = \incl {f \circ g}$, so $f \circ g$ is an inverse-with-support. Then by uniqueness, the fact that the composition $f \circ g$ is a Bayesian inverse implies the strict equality required for functoriality.
\end{proof}

\begin{remark}
\cref{thm:exact-inversion} proves the compatibility of sequential composition in a Markov category with Bayesian inversion, however there is another axis for composition in our graphical notation. 
This is the parallel composition, or monoidal product of $\Ca$.
To ask for this structure to be compatible with inversion is to ask for the functor $T$ to respect the monoidal structure.
Indeed as we noted in \cref{remark:monoidal-structure}, we may naturally define a monoidal product on $\DBLens(\Ca)$, then $T$ straightforwardly inherits the structure of a \emph{lax monoidal functor} with respect to this.
\end{remark}

\begin{remark}
Most of the results developed in this section relied on the existence of support objects. Indeed this is satisfied for our first two examples \cref{ex:finstoch} and \cref{ex:gauss}, but this is quite a strong assumption in general.
The approach to functorial Bayesian inversion taken in \cite{Cho_Jacobs_2019} and later expanded on in \cite{Fritz_2020} is to work in a category $\ProbStoch(\Ca)$ whose objects are equipped with a choice of distribution and whose morphisms are quotiented by almost-sure equality with respect to these chosen distributions. This makes exact functoriality straightforward to prove, but it detracts from the interpretation of morphisms as stochastic functions. Working in $\ProbStoch(\Ca)$ we are unable to think of Bayesian updating as a dynamic process which maps observations to new distributions, since the resultant distribution is already encoded in the codomain.

We could instead take a    hybrid approach, using dependent Bayesian lenses where the forwards kernel is still valued in $\Ca$ but the `backwards’ mapping is in $\ProbStoch(\Ca)$. This means the type of a general inverse can become a function $\{p \in \Ca(I, X)\} \to \ProbStoch(\Ca)((Y, f\circ p), (X, p))$. This provides a lot of the same theoretical niceties of support objects, including exact functoriality, while still applying in a more general context. We leave the further investigation of this construction to future work.
\end{remark}

\section{Example: Estimating Transition Probabilities in Markov Chains}
To illustrate the compositional nature of Bayesian inversion in practice we consider how to view a typical Bayesian inference problem in this framework.
Namely we use as our example, the common problem of learning transition probabilities for a Markov chain from an observed sequence of states \cite{Wu2010}.

In the category $\FinStoch$, a Markov chain is nothing but an endomorphism $t : S \to S$ for some finite set $S$ of \emph{states}, with an initial state distribution $s : I \to S$. Often we do not know the actual transition probabilities, but instead have a family of distributions dependent on another external parameter, $t : S \otimes \Theta \to S$. In such a situation we want to choose an optimal value of $\Theta$ based on a sequence of observations of states in the past.
A common technique for choosing this value is to consider the observed state sequence as an element of the space $S \otimes \ldots \otimes S$, with which we can perform a Bayesian update. That is, given a state sequence $(s_1, \ldots, s_n)$ we would like to compute a posterior distribution $\pr(\Theta | s_1, \ldots, s_n)$. 

From the data involved we can define a family of maps $f_n : \Theta \to S^{\otimes n}$ describing the distribution over $n$-length state sequences corresponding to any value of $\Theta$ (see \cref{fig:composite-kernel} for an example). Computing the Bayesian inverse of $f_n$ we obtain kernels which map sequences $S^{\otimes n}$ to posterior distributions over $\Theta$. Of course, as we have already discussed, in order to compute this all we need is the Bayesian inverse for the transition matrix $t$. Knowing the functorial relationship between `forward' models $f_n$ and their corresponding inverse models, we can build an inverse for $f_n$ out of many copies of the inverse for $t$.

\begin{figure}[h]
\caption{An example of a `state-trace' kernel obtained from a Markov chain, yielding state sequences of length 4.}
\label{fig:composite-kernel}
\centering
$f_4$ = 
\begin{tangle}{(14,11)}[trim y]
	\tgBlank{(0,0)}{white}
	\tgBlank{(1,0)}{white}
	\tgBlank{(2,0)}{white}
	\tgBlank{(3,0)}{white}
	\tgBlank{(4,0)}{white}
	\tgBlank{(5,0)}{white}
	\tgBlank{(6,0)}{white}
	\tgBlank{(7,0)}{white}
	\tgBlank{(8,0)}{white}
	\tgBlank{(9,0)}{white}
	\tgBlank{(10,0)}{white}
	\tgBlank{(11,0)}{white}
	\tgBlank{(12,0)}{white}
	\tgBlank{(13,0)}{white}
	\tgBorderA{(0,1)}{white}{white}{white}{white}
	\tgBorder{(0,1)}{0}{1}{0}{1}
	\tgBorderA{(1,1)}{white}{white}{white}{white}
	\tgBorder{(1,1)}{0}{1}{1}{1}
	\tgBorderA{(2,1)}{white}{white}{white}{white}
	\tgBorder{(2,1)}{0}{1}{0}{1}
	\tgBorderA{(3,1)}{white}{white}{white}{white}
	\tgBorder{(3,1)}{0}{1}{0}{1}
	\tgBorderA{(4,1)}{white}{white}{white}{white}
	\tgBorder{(4,1)}{0}{1}{0}{1}
	\tgBorderA{(5,1)}{white}{white}{white}{white}
	\tgBorder{(5,1)}{0}{1}{1}{1}
	\tgBorderA{(6,1)}{white}{white}{white}{white}
	\tgBorder{(6,1)}{0}{1}{0}{1}
	\tgBorderA{(7,1)}{white}{white}{white}{white}
	\tgBorder{(7,1)}{0}{1}{0}{1}
	\tgBorderA{(8,1)}{white}{white}{white}{white}
	\tgBorder{(8,1)}{0}{1}{0}{1}
	\tgBorderA{(9,1)}{white}{white}{white}{white}
	\tgBorder{(9,1)}{0}{1}{1}{1}
	\tgBorderA{(10,1)}{white}{white}{white}{white}
	\tgBorder{(10,1)}{0}{1}{0}{1}
	\tgBorderA{(11,1)}{white}{white}{white}{white}
	\tgBorder{(11,1)}{0}{1}{0}{1}
	\tgBorderA{(12,1)}{white}{white}{white}{white}
	\tgBorder{(12,1)}{0}{1}{0}{1}
	\tgBorderA{(13,1)}{white}{white}{white}{white}
	\tgBorder{(13,1)}{0}{0}{0}{1}
	\tgBlank{(0,2)}{white}
	\tgBorderA{(1,2)}{white}{white}{white}{white}
	\tgBorder{(1,2)}{1}{0}{1}{0}
	\tgBlank{(2,2)}{white}
	\tgBlank{(3,2)}{white}
	\tgBlank{(4,2)}{white}
	\tgBorderA{(5,2)}{white}{white}{white}{white}
	\tgBorder{(5,2)}{1}{0}{1}{0}
	\tgBlank{(6,2)}{white}
	\tgBlank{(7,2)}{white}
	\tgBlank{(8,2)}{white}
	\tgBorderC{(9,2)}{0}{white}{white}
	\tgBorderA{(10,2)}{white}{white}{white}{white}
	\tgBorder{(10,2)}{0}{0}{1}{1}
	\tgBlank{(11,2)}{white}
	\tgBlank{(12,2)}{white}
	\tgBlank{(13,2)}{white}
	\tgBlank{(0,3)}{white}
	\tgBorderA{(1,3)}{white}{white}{white}{white}
	\tgBorder{(1,3)}{1}{0}{1}{0}
	\tgBlank{(2,3)}{white}
	\tgBlank{(3,3)}{white}
	\tgBlank{(4,3)}{white}
	\tgBorderA{(5,3)}{white}{white}{white}{white}
	\tgBorder{(5,3)}{1}{0}{1}{0}
	\tgBlank{(6,3)}{white}
	\tgBlank{(7,3)}{white}
	\tgBlank{(8,3)}{white}
	\tgBlank{(9,3)}{white}
	\tgBorderA{(10,3)}{white}{white}{white}{white}
	\tgBorder{(10,3)}{1}{1}{1}{0}
	\tgBorderA{(11,3)}{white}{white}{white}{white}
	\tgBorder{(11,3)}{0}{1}{0}{1}
	\tgBorderA{(12,3)}{white}{white}{white}{white}
	\tgBorder{(12,3)}{0}{1}{0}{1}
	\tgBorderA{(13,3)}{white}{white}{white}{white}
	\tgBorder{(13,3)}{0}{1}{0}{1}
	\tgBlank{(0,4)}{white}
	\tgBorderA{(1,4)}{white}{white}{white}{white}
	\tgBorder{(1,4)}{1}{0}{1}{0}
	\tgBlank{(2,4)}{white}
	\tgBlank{(3,4)}{white}
	\tgBlank{(4,4)}{white}
	\tgBorderC{(5,4)}{0}{white}{white}
	\tgBorderA{(6,4)}{white}{white}{white}{white}
	\tgBorder{(6,4)}{0}{0}{1}{1}
	\tgBlank{(7,4)}{white}
	\tgBlank{(8,4)}{white}
	\tgBorderC{(9,4)}{1}{white}{white}
	\tgBorderA{(10,4)}{white}{white}{white}{white}
	\tgBorder{(10,4)}{1}{0}{0}{1}
	\tgBlank{(11,4)}{white}
	\tgBlank{(12,4)}{white}
	\tgBlank{(13,4)}{white}
	\tgBlank{(0,5)}{white}
	\tgBorderA{(1,5)}{white}{white}{white}{white}
	\tgBorder{(1,5)}{1}{0}{1}{0}
	\tgBlank{(2,5)}{white}
	\tgBlank{(3,5)}{white}
	\tgBlank{(4,5)}{white}
	\tgBlank{(5,5)}{white}
	\tgBorderA{(6,5)}{white}{white}{white}{white}
	\tgBorder{(6,5)}{1}{1}{1}{0}
	\tgBorderA{(7,5)}{white}{white}{white}{white}
	\tgBorder{(7,5)}{0}{1}{0}{1}
	\tgBorderA{(8,5)}{white}{white}{white}{white}
	\tgBorder{(8,5)}{0}{1}{0}{1}
	\tgBorderA{(9,5)}{white}{white}{white}{white}
	\tgBorder{(9,5)}{1}{1}{0}{1}
	\tgBorderA{(10,5)}{white}{white}{white}{white}
	\tgBorder{(10,5)}{0}{1}{0}{1}
	\tgBorderA{(11,5)}{white}{white}{white}{white}
	\tgBorder{(11,5)}{0}{1}{0}{1}
	\tgBorderA{(12,5)}{white}{white}{white}{white}
	\tgBorder{(12,5)}{0}{1}{0}{1}
	\tgBorderA{(13,5)}{white}{white}{white}{white}
	\tgBorder{(13,5)}{0}{1}{0}{1}
	\tgBlank{(0,6)}{white}
	\tgBorderC{(1,6)}{0}{white}{white}
	\tgBorderA{(2,6)}{white}{white}{white}{white}
	\tgBorder{(2,6)}{0}{0}{1}{1}
	\tgBlank{(3,6)}{white}
	\tgBlank{(4,6)}{white}
	\tgBorderC{(5,6)}{1}{white}{white}
	\tgBorderA{(6,6)}{white}{white}{white}{white}
	\tgBorder{(6,6)}{1}{0}{0}{1}
	\tgBlank{(7,6)}{white}
	\tgBlank{(8,6)}{white}
	\tgBlank{(9,6)}{white}
	\tgBlank{(10,6)}{white}
	\tgBlank{(11,6)}{white}
	\tgBlank{(12,6)}{white}
	\tgBlank{(13,6)}{white}
	\tgBlank{(0,7)}{white}
	\tgBlank{(1,7)}{white}
	\tgBorderA{(2,7)}{white}{white}{white}{white}
	\tgBorder{(2,7)}{1}{1}{1}{0}
	\tgBorderA{(3,7)}{white}{white}{white}{white}
	\tgBorder{(3,7)}{0}{1}{0}{1}
	\tgBorderA{(4,7)}{white}{white}{white}{white}
	\tgBorder{(4,7)}{0}{1}{0}{1}
	\tgBorderA{(5,7)}{white}{white}{white}{white}
	\tgBorder{(5,7)}{1}{1}{0}{1}
	\tgBorderA{(6,7)}{white}{white}{white}{white}
	\tgBorder{(6,7)}{0}{1}{0}{1}
	\tgBorderA{(7,7)}{white}{white}{white}{white}
	\tgBorder{(7,7)}{0}{1}{0}{1}
	\tgBorderA{(8,7)}{white}{white}{white}{white}
	\tgBorder{(8,7)}{0}{1}{0}{1}
	\tgBorderA{(9,7)}{white}{white}{white}{white}
	\tgBorder{(9,7)}{0}{1}{0}{1}
	\tgBorderA{(10,7)}{white}{white}{white}{white}
	\tgBorder{(10,7)}{0}{1}{0}{1}
	\tgBorderA{(11,7)}{white}{white}{white}{white}
	\tgBorder{(11,7)}{0}{1}{0}{1}
	\tgBorderA{(12,7)}{white}{white}{white}{white}
	\tgBorder{(12,7)}{0}{1}{0}{1}
	\tgBorderA{(13,7)}{white}{white}{white}{white}
	\tgBorder{(13,7)}{0}{1}{0}{1}
	\tgBlank{(0,8)}{white}
	\tgBorderC{(1,8)}{1}{white}{white}
	\tgBorderA{(2,8)}{white}{white}{white}{white}
	\tgBorder{(2,8)}{1}{0}{0}{1}
	\tgBlank{(3,8)}{white}
	\tgBlank{(4,8)}{white}
	\tgBlank{(5,8)}{white}
	\tgBlank{(6,8)}{white}
	\tgBlank{(7,8)}{white}
	\tgBlank{(8,8)}{white}
	\tgBlank{(9,8)}{white}
	\tgBlank{(10,8)}{white}
	\tgBlank{(11,8)}{white}
	\tgBlank{(12,8)}{white}
	\tgBlank{(13,8)}{white}
	\tgBorderA{(0,9)}{white}{white}{white}{white}
	\tgBorder{(0,9)}{0}{1}{0}{0}
	\tgBorderA{(1,9)}{white}{white}{white}{white}
	\tgBorder{(1,9)}{1}{1}{0}{1}
	\tgBorderA{(2,9)}{white}{white}{white}{white}
	\tgBorder{(2,9)}{0}{1}{0}{1}
	\tgBorderA{(3,9)}{white}{white}{white}{white}
	\tgBorder{(3,9)}{0}{1}{0}{1}
	\tgBorderA{(4,9)}{white}{white}{white}{white}
	\tgBorder{(4,9)}{0}{1}{0}{1}
	\tgBorderA{(5,9)}{white}{white}{white}{white}
	\tgBorder{(5,9)}{0}{1}{0}{1}
	\tgBorderA{(6,9)}{white}{white}{white}{white}
	\tgBorder{(6,9)}{0}{1}{0}{1}
	\tgBorderA{(7,9)}{white}{white}{white}{white}
	\tgBorder{(7,9)}{0}{1}{0}{1}
	\tgBorderA{(8,9)}{white}{white}{white}{white}
	\tgBorder{(8,9)}{0}{1}{0}{1}
	\tgBorderA{(9,9)}{white}{white}{white}{white}
	\tgBorder{(9,9)}{0}{1}{0}{1}
	\tgBorderA{(10,9)}{white}{white}{white}{white}
	\tgBorder{(10,9)}{0}{1}{0}{1}
	\tgBorderA{(11,9)}{white}{white}{white}{white}
	\tgBorder{(11,9)}{0}{1}{0}{1}
	\tgBorderA{(12,9)}{white}{white}{white}{white}
	\tgBorder{(12,9)}{0}{1}{0}{1}
	\tgBorderA{(13,9)}{white}{white}{white}{white}
	\tgBorder{(13,9)}{0}{1}{0}{1}
	\tgBlank{(0,10)}{white}
	\tgBlank{(1,10)}{white}
	\tgBlank{(2,10)}{white}
	\tgBlank{(3,10)}{white}
	\tgBlank{(4,10)}{white}
	\tgBlank{(5,10)}{white}
	\tgBlank{(6,10)}{white}
	\tgBlank{(7,10)}{white}
	\tgBlank{(8,10)}{white}
	\tgBlank{(9,10)}{white}
	\tgBlank{(10,10)}{white}
	\tgBlank{(11,10)}{white}
	\tgBlank{(12,10)}{white}
	\tgBlank{(13,10)}{white}
	\tgCell[(1,2)]{(2.5,7)}{t}
	\tgCell[(1,2)]{(6.5,5)}{t}
	\tgCell[(1,2)]{(10.5,3)}{t}
	\tgDot{(13,1)}{}
	\tgCell[(0,1)]{(0,9)}{s}
	\tgAxisLabel{(0,1.5)}{east}{\Theta}
	\tgAxisLabel{(14,3.5)}{west}{S}
	\tgAxisLabel{(14,5.5)}{west}{S}
	\tgAxisLabel{(14,7.5)}{west}{S}
	\tgAxisLabel{(14,9.5)}{west}{S}
\end{tangle}
\end{figure}

\cref{fig:composite-kernel} depicts a typical composite which we may want to invert. It consists only of four kinds of non-trivial cell: the transition matrix $t$, the initial state distribution $s$, copy maps $\dup_S : S \to S \otimes S$ and $\dup_\Theta : \Theta \to \Theta \otimes \Theta$, and a delete map $\delete_\Theta : \Theta \to I$.
As $t$ is an arbitrary kernel there is little we can say about its Bayesian inverse, but we can fully characterise the inverses for the other kinds of cells.
The inverse of $s$ at $p$ should have a type $s^\sharp_p : S_p \to I$, but by the uniqueness of the delete map this is equal to $\delete_{S_p} : S_p \to I$.
Dually, filling in the definition of Bayesian inversion for $\delete_\Theta$ at $p$ requires that $(\delete_\Theta)^\sharp_p$ is exactly $p : I \to \Theta$.

Finally we consider the Bayesian inverse of $\dup_S$. Of course the case for $\dup_\Theta$ is identical.
In the case of non-dependent Bayesian lenses, without supports, it is difficult to understand what these copy maps do.
The copy operation is supported on only a small subset of its domain. Intuitively this is the diagonal set $\{(s, s) | s \in S \}$, but in the abstract case this set notation is not valid. We may wonder if there is a way that we can describe this using only the axiomatic setting we have been working with so far.
Indeed, we can show that there is in fact an isomorphism of the support object $(S \otimes S)_{\dup_\Theta \circ p} \cong S_p$ and moreover, the morphism witnessing this isomorphism is exactly the Bayesian inverse of $\dup$. %TODO: Proof

This isomorphism is a very powerful fact. It enforces in the type of a morphism, using the notion of dependent typing present in $\DBLens(\Ca)$, that the observations we make from a process involving copying should preserve the exact equalities expected given our knowledge about the generative processes from which the observations originated.

It is not difficult to derive an explicit formula for this with traditional probability theory, but our structural viewpoint can offer a more pedagogical approach to describing inference algorithms.
Using a 2-dimensional syntax for kernels and understanding the compositional nature of Bayesian inversion, a lot of the apparent complexity of equations in statistics is clarified by applying operations piecewise on string diagrams.
It additionally becomes straightforward to see how to extend this to more complicated scenarios. For example, a hidden Markov model is obtained from the above example simply by postcomposing each wire with an additional kernel $o: S \to O$. Then compositionality described exactly how to extend a solution to account for the additional mapping.

This approach provides not just additional pedagogy, but also is very amenable to algorithmic study: as future work, we hope to investigate ways in which, by isolating the basic repeated units and their compositional structure, we can automatically generate a lot of the additional data required and can perhaps facilitate certain optimisations.

\section{Further work}
Our work situates Bayesian inversion amongst a range of bidirectional processes observed in different contexts, each of which exhibit lens (or lens-like) structure and `cybernetic' application \cite{Capucci2021Towards}:
reverse-mode automatic differentiation (generalizing backpropagation of error) \cite{gavranovic_etal_compositional_gradient}; economic games \cite{Ghani_Hedges_Winschel_Zahn_2018}; reinforcement learning \cite{Hedges2022Value}; and database updating \cite{Foster_Greenwald_Moore_Pierce_Schmitt_2007}.
It was in this latter setting, in the context of functional programming, that lenses were first described, and generalizations of lenses remain in popular use in that setting.
This points to a first avenue for future work: a new approach to probabilistic programming that incorporates both Bayesian and differential updating, extending the currently popular use of deterministic lenses, and with general application to cybernetic systems.

Probabilistic programming languages allow the programmer essentially to construct Markov kernels in a compositional way using a standard programming language syntax, and perform inference on the resulting models \cite{vandeMeent2021Introduction}.
Typically, performing inference is not transparently compositional, and so we hope that our results will lead to improvements here, for instance by allowing the programmer to `amortize' parts of an inference process.
Perhaps more importantly, our approach could lead to the use of a dependent type system to statically rule out events of measure zero.
Furthermore, in a generically bidirectional language of the kind we envisage, we expect that compilers will be able to share optimizations across differential and probabilistic parts.

We expect these ideas not only to be of use to language designers therefore, but also to users.
In many applications of probabilistic inference, one first constructs a ``joint model'': a joint state across all the variables of interest, which may factorise according to a Bayesian network or other graphical model.
Because computing exact Bayesian inversions involves marginalization (normalization: the sum or integral in the Chapman-Kolmogorov equation), and such a computation is often intractable, one usually resorts to approximate methods, and this is where the `extra' morphisms in categories of Bayesian lenses come in: they can be understood as approxmate inverses.

By parameterizing these lenses, one can associate to them \textit{loss functions} that characterize ``how far'' a given inversion is from optimality\footnote{
  Examples of such loss functions include the relative entropy (Kullback-Leibler divergence) and variational free energy (``evidence lower bound'').};
like the inversions themselves, these losses depend on both the priors and the observations, and they again compose according to a lens pattern.
The third author, in the unpublished works \cite{Smithe_2021,Smithe2022Mathematical}, has begun developing this programme, based on ideas from compositional game theory \cite{Ghani_Hedges_Winschel_Zahn_2018}.
It results in an account of approximate inference whose algorithms are ``correct by construction'', which may be an advantage over traditional methods which simply start from a given joint distribution.
Moreover, because the loss functions are `local' to each lens, the resulting framework captures the compositional structure seemingly exhibited by ``predictive coding'' neural circuits in the brain \cite{Bastos2012Canonical}.
Similarly, by making use of the lens structure presented here, the framework suggests a formal unification of backprop and predictive coding, as sought by various authors \cite{Millidge2022Predictive,Rosenbaum2022Relationship}, and it reveals connections to the method of backward induction in reinforcement learning \cite{Hedges2022Value}.
We hope that future developments make use of these relationships, so that we may build intelligent systems that are both efficient and well-understood.

%
%%%
%%% Bibliography
%%%
%
%%% Please use bibtex, 
%
\bibliography{MFCS23.bib}

\begin{thebibliography}{10}

\bibitem{Bastos2012Canonical}
A.~M. Bastos, W.~M. Usrey, R.~A. Adams, G.~R. Mangun, P.~Fries, and K.~J.
  Friston.
\newblock Canonical microcircuits for predictive coding.
\newblock {\em Neuron}, 76(4):695--711, November 2012.
\newblock \href {https://doi.org/10.1016/j.neuron.2012.10.038}
  {\path{doi:10.1016/j.neuron.2012.10.038}}.

\bibitem{Borceux1994Handbook2}
Francis Borceux.
\newblock {\em Handbook of {{Categorical Algebra}} 2. {{Categories}} and
  {{Structures}}}, volume~51 of {\em Encyclopedia of {{Mathematics}} and Its
  {{Applications}}}.
\newblock {Cambridge University Press, Cambridge}, 1994.

\bibitem{braithwaite_hedges_dependent_bayesian}
Dylan Braithwaite and Jules Hedges.
\newblock Dependent {B}ayesian lenses: {C}ategories of bidirectional {M}arkov
  kernels with canonical bayesian inversionbayesian lenses: Categories of
  bidirectional markov kernels with canonical {B}ayesian inversion.
\newblock arXiv: 2209.14728, 2022.

\bibitem{Capucci2021Towards}
Matteo Capucci, Bruno Gavranovi{\'c}, Jules Hedges, and Eigil~Fjeldgren
  Rischel.
\newblock Towards foundations of categorical cybernetics.
\newblock May 2021.
\newblock \href {http://arxiv.org/abs/2105.06332} {\path{arXiv:2105.06332}}.

\bibitem{Cho_Jacobs_2019}
Kenta Cho and Bart Jacobs.
\newblock Disintegration and bayesian inversion via string diagrams.
\newblock {\em Mathematical Structures in Computer Science}, 29(7):938--971,
  Aug 2019.
\newblock URL:
  \url{https://www.cambridge.org/core/product/identifier/S0960129518000488/type/journal_article},
  \href {https://doi.org/10.1017/S0960129518000488}
  {\path{doi:10.1017/S0960129518000488}}.

\bibitem{gavranovic_etal_compositional_gradient}
Geoffrey S.~H. Cruttwell, Bruno Gavranovi\'c, Neil Ghani, Paul Wilson, and
  Fabio Zanasi.
\newblock Categorical foundations of gradient-based learning.
\newblock In {\em ESOP 2022: Programming Languages and Systems}, volume 13240
  of {\em Lecture Notes in Computer Science}, pages 1--28, 2022.

\bibitem{Foster_Greenwald_Moore_Pierce_Schmitt_2007}
J.~Nathan Foster, Michael~B. Greenwald, Jonathan~T. Moore, Benjamin~C. Pierce,
  and Alan Schmitt.
\newblock Combinators for bidirectional tree transformations: A linguistic
  approach to the view-update problem.
\newblock {\em ACM Transactions on Programming Languages and Systems},
  29(3):17, May 2007.
\newblock URL: \url{https://dl.acm.org/doi/10.1145/1232420.1232424}, \href
  {https://doi.org/10.1145/1232420.1232424}
  {\path{doi:10.1145/1232420.1232424}}.

\bibitem{Fritz_2020}
Tobias Fritz.
\newblock A synthetic approach to markov kernels, conditional independence and
  theorems on sufficient statistics.
\newblock {\em Advances in Mathematics}, 370:107239, Aug 2020.
\newblock URL:
  \url{https://linkinghub.elsevier.com/retrieve/pii/S0001870820302656}, \href
  {https://doi.org/10.1016/j.aim.2020.107239}
  {\path{doi:10.1016/j.aim.2020.107239}}.

\bibitem{Ghani_Hedges_Winschel_Zahn_2018}
Neil Ghani, Jules Hedges, Viktor Winschel, and Philipp Zahn.
\newblock Compositional game theory.
\newblock pages 472--481, Oxford United Kingdom, Jul 2018. ACM.
\newblock URL: \url{https://dl.acm.org/doi/10.1145/3209108.3209165}, \href
  {https://doi.org/10.1145/3209108.3209165}
  {\path{doi:10.1145/3209108.3209165}}.

\bibitem{giry82}
Mich\`elle Giry.
\newblock A categorical approach to probability theory.
\newblock {\em Categorical aspects of topology and analysis}, pages 68--85,
  1982.
\newblock \href {https://doi.org/10.1007/BFb0092872}
  {\path{doi:10.1007/BFb0092872}}.

\bibitem{Hedges2022Value}
Jules Hedges and Riu~Rodr{\'\i}guez Sakamoto.
\newblock Value iteration is optic composition.
\newblock June 2022.
\newblock \href {http://arxiv.org/abs/2206.04547} {\path{arXiv:2206.04547}}.

\bibitem{Jacobs1999Categorical}
Bart Jacobs.
\newblock {\em Categorical Logic and Type Theory}, volume 141 of {\em Studies
  in {{Logic}} and the {{Foundations}} of {{Mathematics}}}.
\newblock {North-Holland Publishing Co., Amsterdam}, 1999.

\bibitem{Millidge2022Predictive}
Beren Millidge, Tommaso Salvatori, Yuhang Song, Rafal Bogacz, and Thomas
  Lukasiewicz.
\newblock Predictive {{Coding}}: {{Towards}} a {{Future}} of {{Deep Learning}}
  beyond {{Backpropagation}}?, February 2022.
\newblock URL: \url{http://arxiv.org/abs/2202.09467}, \href
  {http://arxiv.org/abs/2202.09467} {\path{arXiv:2202.09467}}.

\bibitem{Rosenbaum2022Relationship}
Robert Rosenbaum.
\newblock On the relationship between predictive coding and backpropagation.
\newblock {\em PLOS ONE}, 17(3):e0266102, March 2022.
\newblock URL: \url{https://dx.plos.org/10.1371/journal.pone.0266102}, \href
  {https://doi.org/10.1371/journal.pone.0266102}
  {\path{doi:10.1371/journal.pone.0266102}}.

\bibitem{smithe_bayesian_optically}
Toby St.~Clere Smithe.
\newblock Bayesian updates compose optically.
\newblock arXiv: 2006.01631, 2020.

\bibitem{Smithe_2021}
Toby St.~Clere Smithe.
\newblock Compositional active inference {I}: Bayesian lenses. statistical
  games.
\newblock 2021.
\newblock URL: \url{https://arxiv.org/abs/2109.04461}, \href
  {https://doi.org/10.48550/ARXIV.2109.04461}
  {\path{doi:10.48550/ARXIV.2109.04461}}.

\bibitem{Smithe2022Mathematical}
Toby St~Clere Smithe.
\newblock Mathematical {{Foundations}} for a {{Compositional Account}} of the
  {{Bayesian Brain}}, December 2022.
\newblock URL: \url{http://arxiv.org/abs/2212.12538}, \href
  {http://arxiv.org/abs/2212.12538} {\path{arXiv:2212.12538}}.

\bibitem{Spivak_2019}
David~I. Spivak.
\newblock Generalized lens categories via functors $\mathcal{C}^{\rm
  op}\to\mathsf{Cat}$.
\newblock arXiv: 1908.02202, 2019.
\newblock URL: \url{https://arxiv.org/abs/1908.02202}, \href
  {https://doi.org/10.48550/ARXIV.1908.02202}
  {\path{doi:10.48550/ARXIV.1908.02202}}.

\bibitem{vandeMeent2021Introduction}
Jan-Willem {van de Meent}, Brooks Paige, Hongseok Yang, and Frank Wood.
\newblock An {{Introduction}} to {{Probabilistic Programming}}, October 2021.
\newblock URL: \url{http://arxiv.org/abs/1809.10756}, \href
  {http://arxiv.org/abs/1809.10756} {\path{arXiv:1809.10756}}.

\bibitem{Wu2010}
H.~Wu and F.~No{\'e}.
\newblock Maximum a posteriori estimation for markov chains based on gaussian
  markov random fields.
\newblock {\em Procedia Computer Science}, 1(1):1665--1673, 2010.
\newblock ICCS 2010.
\newblock URL:
  \url{https://www.sciencedirect.com/science/article/pii/S1877050910001870},
  \href {https://doi.org/https://doi.org/10.1016/j.procs.2010.04.186}
  {\path{doi:https://doi.org/10.1016/j.procs.2010.04.186}}.

\end{thebibliography}

\appendix

%\section{The Monoidal Structure of Bayesian Lenses}

\section{Uniqueness of Bayesian Inversion}
\label{appendix:support-calculus}
\begin{notation}
In the graphical language we denote the inclusion and restriction kernels for a support object respectively as follows:
% https://varkor.github.io/tangle/?t=W1tbXV0sW1tbMSxbMCwxLDAsMV1dLFtdLFtdLFsxLFswLDEsMCwxXV1dXSxbWzAsMy41LDAuNSxbIiJdXSxbMCwwLjUsMC41LFsiIl1dXSxbWzAsMCwyLCJYX3AiXSxbMCwwLDAsIlgiXSxbMywwLDIsIlgiXSxbMywwLDAsIlhfcCJdXV0=&c=F5A3A3,F5CCA3,F5F5A3,CCF5A3,A3F5A3,A3F5CC,A3F5F5,A3CCF5,A3A3F5,CCA3F5,F5A3F5,F5A3CC
\begin{tangle}{(4,1)}[trim y]
	\tgBorderA{(0,0)}{white}{white}{white}{white}
	\tgBorder{(0,0)}{0}{1}{0}{1}
	\tgBorderA{(3,0)}{white}{white}{white}{white}
	\tgBorder{(3,0)}{0}{1}{0}{1}
	\tgRestr{(3,0)}{}
	\tgIncl{(0,0)}{}
	\tgAxisLabel{(0,0.5)}{east}{X_p}
	\tgAxisLabel{(1,0.5)}{west}{X}
	\tgAxisLabel{(3,0.5)}{east}{X}
	\tgAxisLabel{(4,0.5)}{west}{X_p}
\end{tangle}.
\end{notation}

This is a convenient shorthand for indicating the section-retract relation between sections and restrictions, however care must be taken when calculating with these. 
Since we are not labelling the triangles, we should only pair inclusions and restrictions corresponding to the same states; the characterising equations only hold for such pairs.
However in practice it is difficult to naturally arrive at a situation where this caveat can be an issue, so this is less of a shortcoming that it might seem.

We have strict equality of inclusions followed by restrictions, but in the converse we have almost-equality of restrictions followed by inclusions.

\begin{lemma}
\label{thm:restr-incl-almost-inv}
Restriction is almost inverse to inclusion:
\[X \xrightarrow{r} X_p \xrightarrow{i} X \quad \aeq_p \quad X \xrightarrow{\id} X.\]
\end{lemma}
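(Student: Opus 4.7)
The plan is to reduce the almost-sure equality to a strict equality via the characterization of support objects established in Proposition~\ref{thm:support-objects}. Recall that this characterization states $f \aeq_p g \iff f \circ i = g \circ i$ for any parallel kernels $f, g : X \to Y$. Applying this to the case $f = i \circ r$ and $g = \id_X$, the goal $i \circ r \aeq_p \id_X$ reduces to showing the strict equality $i \circ r \circ i = i$.

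This strict equality follows immediately from the section-retraction relation: by hypothesis, $r \circ i = \id_{X_p}$, and therefore $i \circ r \circ i = i \circ \id_{X_p} = i$. So the entire proof is essentially just an application of the support-object universal property, combined with a one-line cancellation.

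I do not expect any real obstacle; the only subtle point is to get the direction of the equivalence right (namely that pre-composition with $i$ detects almost-sure equality at $p$), and to note that this same observation appeared informally at the end of the proof of Proposition~\ref{thm:support-objects} (there used to verify $i \circ r \aeq_p \id_{X_p}$, which is a different but closely analogous fact). Here the asymmetry is just that we work on $X$ rather than on $X_p$; the argument is otherwise identical.
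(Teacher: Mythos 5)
Your proof is correct and is essentially identical to the paper's: both reduce $i \circ r \aeq_p \id_X$ to the strict equality $i \circ r \circ i = i$ via the detection property of pre-composition with $i$, and then obtain that equality from $r \circ i = \id_{X_p}$. (Minor aside: the claim at the end of the proof of Proposition~\ref{thm:support-objects} is in fact the very same statement --- the subscript on $\id_{X_p}$ there is a typo, since $i \circ r$ is an endomorphism of $X$ --- so there is no asymmetry to worry about.)
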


\begin{proof}
Note that $i \circ r \circ i = i$ so by the quotienting property of $(-) \circ i$ we have $i \circ r \aeq_p \id$.
\end{proof}

\begin{proposition}
\label{bayesian_inverse_bijection}
Fix kernels $f: X \to Y$ and $p: I \to X$, and support objects $X_p$ and $Y_{f p}$.
Then inverses-with-support of $f$ at $p$ are in bijection with $\aeq_{f p}$-equivalence classes of ordinary Bayesian inverses.
\end{proposition}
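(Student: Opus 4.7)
The plan is to construct explicit maps in both directions and verify they are mutually inverse using the support calculus. Sending an inverse-with-support $h : Y_{f\circ p} \to X_p$ to the equivalence class $[\incl{h}]_{\aeq_{f\circ p}}$ is well-defined directly by the definition of inverse-with-support. For the reverse, given any ordinary Bayesian inverse $g : Y \to X$ of $f$ at $p$, I would send it to the restriction $\restr{g}{f\circ p} = r_{X_p} \circ g \circ i_{Y_{f\circ p}} : Y_{f\circ p} \to X_p$.

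Well-definedness of the reverse map follows from the representability of $Y_{f\circ p}$: by \cref{thm:support-objects}, pre-composition with $i_{Y_{f\circ p}}$ descends to a bijection on $\aeq_{f\circ p}$-equivalence classes, so if $g_1 \aeq_{f\circ p} g_2$ then $g_1 \circ i_{Y_{f\circ p}} = g_2 \circ i_{Y_{f\circ p}}$, and post-composing with $r_{X_p}$ preserves this equality. I also need to check that $\restr{g}{f\circ p}$ actually is an inverse-with-support, i.e.\ that $\incl{\restr{g}{f\circ p}} = i_{X_p} \circ r_{X_p} \circ g \circ i_{Y_{f\circ p}} \circ r_{Y_{f\circ p}}$ is an ordinary Bayesian inverse. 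Since Bayesian inverses are only specified up to $\aeq_{f\circ p}$, it suffices to show this composite is $\aeq_{f\circ p}$-equal to $g$ itself, which I would do by applying \cref{thm:restr-incl-almost-inv} on each side: the inner $i_{Y_{f\circ p}} \circ r_{Y_{f\circ p}}$ is $\aeq_{f\circ p}$ the identity directly, while for the outer $i_{X_p} \circ r_{X_p}$ I use the standard fact that a Bayesian inverse $g$ pushes the state $f\circ p$ forward to $p$, so that $i_{X_p} \circ r_{X_p} \aeq_p \id_X$ transports to $i_{X_p} \circ r_{X_p} \circ g \aeq_{f\circ p} g$.

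The two round trips are then easy. One direction, $h \mapsto \incl{h} \mapsto \restr{\incl{h}}{f\circ p} = r_{X_p} \circ i_{X_p} \circ h \circ r_{Y_{f\circ p}} \circ i_{Y_{f\circ p}}$, collapses to $h$ by the strict equality $r \circ i = \id$ on support objects. The other direction, $[g] \mapsto \restr{g}{f\circ p} \mapsto [\incl{\restr{g}{f\circ p}}]$, reduces to exactly the almost-equality already established above.

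I expect the main obstacle to be the careful bookkeeping of which state witnesses each almost-sure equality, particularly the argument that $i_{X_p} \circ r_{X_p} \circ g \aeq_{f\circ p} g$. This rests on the observation that being a Bayesian inverse forces $g$ to pushforward $f\circ p$ to $p$, so that an almost-equality in the context of $p$ on $X$ propagates to an almost-equality in the context of $f\circ p$ on $Y$; once this is stated precisely the rest of the proof is essentially formal manipulation of the support calculus.
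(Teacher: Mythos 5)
Your proposal is correct and follows essentially the same route as the paper's proof: the two maps are exactly the paper's $\Psi(h) = \incl{h}$ and $\tilde\Psi(g) = \restr{g}{f\circ p}$, the round trips are handled the same way (strict $r \circ i = \id$ in one direction, the established almost-equality in the other), and your explicit well-definedness check on $\aeq_{f\circ p}$-classes via \cref{thm:support-objects} is a point the paper leaves implicit, so that is a small improvement.

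One step deserves strengthening. You justify $i_{X_p} \circ r_{X_p} \circ g \aeq_{f\circ p} g$ by the fact that $g$ pushes $f \circ p$ forward to $p$, but in a general Markov category the marginal condition alone does not let you transport an almost-sure equality along a precomposition: $h_1 \aeq_p h_2$ together with $g \circ q = p$ does not imply $h_1 \circ g \aeq_q h_2 \circ g$, since the relevant joint state $(\id \otimes g) \circ \dup \circ q$ is not determined by its marginal. What makes the step work here is the full Bayesian-inverse equation for $g$, which rewrites that joint as $f$ applied to one branch of $\dup_X \circ p$; the almost-sure equality $i \circ r \aeq_p \id_X$ from \cref{thm:restr-incl-almost-inv} then applies directly to the other branch, and the inverse equation converts back. (The same equation, marginalised by deleting the $Y$ wire, also gives $g \circ f \circ p = p$, which you need so that $\restr{g}{f\circ p}$ genuinely has codomain $X_p$ rather than $X_{g\circ f\circ p}$; the paper proves this explicitly rather than citing it as standard.) With that substitution your argument goes through exactly as in the paper.
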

\begin{proof}

We first exhibit a map $\Psi$ from inverses-with-support to ordinary inverses.
Let $g: Y_{f p} \to X_p$ be a Bayesian inverse with support of $f$ at $p$.
By the definition of inverses with support this means that $\Psi(g) \defeq \incl g$ is an ordinary Bayesian inverse.

Conversely if $h: Y \to X$ is an ordinary Bayesian inverse to $f$ at $p$, we want to define an inverse with support
$\tilde{\Psi}(h) : Y_{fp} \to X_p$.
As a first guess we might try the restriction $\restr h {fp}$ but we see that the codomain of this kernel does not match.
Namely we have $\restr h {fp} : Y_{fp} \to X_{hfp}$ but require a codomain of $X_p$.
In fact we can verify that $hfp = p$:
since $h$ is a Bayesian inverse of $f$ at $p$ we have that
\begin{equation*}
% https://varkor.github.io/tangle/?t=W1tbXV0sW1tbMCxbXV0sWzAsW11dLFswLFtdXSxbMCxbXV0sWzAsW11dLFswLFtdXSxbMCxbXV1dLFtbMCxbXV0sWzAsW11dLFswLFtdXSxbMixbMl1dLFsxLFswLDEsMCwxXV0sWzEsWzAsMSwwLDFdXSxbMSxbMCwxLDAsMV1dXSxbWzEsWzAsMSwwLDBdXSxbMSxbMCwxLDAsMV1dLFsxLFswLDEsMCwxXV0sWzEsWzEsMCwxLDFdXSxbMCxbXV0sWzAsW11dLFswLFtdXV0sW1swLFtdXSxbMCxbXV0sWzAsW11dLFsyLFsxXV0sWzEsWzAsMSwwLDFdXSxbMSxbMCwxLDAsMV1dLFsxLFswLDAsMCwxXV1dLFtbMCxbXV0sWzAsW11dLFswLFtdXSxbMCxbXV0sWzAsW11dLFswLFtdXSxbMCxbXV1dXSxbWzAsMC41LDIuNSxbInAiLDAsMV1dLFswLDIsMi41LFsiZiIsMSwxXV0sWzAsNSwxLjUsWyJoIiwxLDFdXSxbMCw2LjUsMy41LFsiIl1dXSxbWzYsMSwwLCJYIl1dXQ==&c=F5A3A3,F5CCA3,F5F5A3,CCF5A3,A3F5A3,A3F5CC,A3F5F5,A3CCF5,A3A3F5,CCA3F5,F5A3F5,F5A3CC
\begin{tangle}{(7,5)}
	\tgBlank{(0,0)}{white}
	\tgBlank{(1,0)}{white}
	\tgBlank{(2,0)}{white}
	\tgBlank{(3,0)}{white}
	\tgBlank{(4,0)}{white}
	\tgBlank{(5,0)}{white}
	\tgBlank{(6,0)}{white}
	\tgBlank{(0,1)}{white}
	\tgBlank{(1,1)}{white}
	\tgBlank{(2,1)}{white}
	\tgBorderC{(3,1)}{1}{white}{white}
	\tgBorderA{(4,1)}{white}{white}{white}{white}
	\tgBorder{(4,1)}{0}{1}{0}{1}
	\tgBorderA{(5,1)}{white}{white}{white}{white}
	\tgBorder{(5,1)}{0}{1}{0}{1}
	\tgBorderA{(6,1)}{white}{white}{white}{white}
	\tgBorder{(6,1)}{0}{1}{0}{1}
	\tgBorderA{(0,2)}{white}{white}{white}{white}
	\tgBorder{(0,2)}{0}{1}{0}{0}
	\tgBorderA{(1,2)}{white}{white}{white}{white}
	\tgBorder{(1,2)}{0}{1}{0}{1}
	\tgBorderA{(2,2)}{white}{white}{white}{white}
	\tgBorder{(2,2)}{0}{1}{0}{1}
	\tgBorderA{(3,2)}{white}{white}{white}{white}
	\tgBorder{(3,2)}{1}{0}{1}{1}
	\tgBlank{(4,2)}{white}
	\tgBlank{(5,2)}{white}
	\tgBlank{(6,2)}{white}
	\tgBlank{(0,3)}{white}
	\tgBlank{(1,3)}{white}
	\tgBlank{(2,3)}{white}
	\tgBorderC{(3,3)}{0}{white}{white}
	\tgBorderA{(4,3)}{white}{white}{white}{white}
	\tgBorder{(4,3)}{0}{1}{0}{1}
	\tgBorderA{(5,3)}{white}{white}{white}{white}
	\tgBorder{(5,3)}{0}{1}{0}{1}
	\tgBorderA{(6,3)}{white}{white}{white}{white}
	\tgBorder{(6,3)}{0}{0}{0}{1}
	\tgBlank{(0,4)}{white}
	\tgBlank{(1,4)}{white}
	\tgBlank{(2,4)}{white}
	\tgBlank{(3,4)}{white}
	\tgBlank{(4,4)}{white}
	\tgBlank{(5,4)}{white}
	\tgBlank{(6,4)}{white}
	\tgCell[(0,1)]{(0,2)}{p}
	\tgCell[(1,1)]{(1.5,2)}{f}
	\tgCell[(1,1)]{(4.5,1)}{h}
	\tgDot{(6,3)}{}
	\tgAxisLabel{(7,1.5)}{west}{X}
\end{tangle}
=
% https://varkor.github.io/tangle/?t=W1tbXV0sW1tbMCxbXV0sWzAsW11dLFswLFtdXSxbMCxbXV0sWzAsW11dLFswLFtdXV0sW1swLFtdXSxbMCxbXV0sWzIsWzJdXSxbMSxbMCwxLDAsMV1dLFsxLFswLDEsMCwxXV0sWzEsWzAsMSwwLDFdXV0sW1swLFtdXSxbMSxbMCwxLDAsMV1dLFsxLFsxLDAsMSwxXV0sWzAsW11dLFswLFtdXSxbMCxbXV1dLFtbMCxbXV0sWzAsW11dLFsyLFsxXV0sWzEsWzAsMSwwLDFdXSxbMSxbMCwxLDAsMV1dLFsxLFswLDAsMCwxXV1dLFtbMCxbXV0sWzAsW11dLFswLFtdXSxbMCxbXV0sWzAsW11dLFswLFtdXV1dLFtbMCw0LDMuNSxbImYiLDEsMV1dLFswLDEsMi41LFsicCIsMCwxXV0sWzAsNS41LDMuNSxbIiJdXV0sW1s1LDEsMCwiWCJdLFs1LDMsMCwiWSJdXV0=&c=F5A3A3,F5CCA3,F5F5A3,CCF5A3,A3F5A3,A3F5CC,A3F5F5,A3CCF5,A3A3F5,CCA3F5,F5A3F5,F5A3CC
\begin{tangle}{(6,5)}
	\tgBlank{(0,0)}{white}
	\tgBlank{(1,0)}{white}
	\tgBlank{(2,0)}{white}
	\tgBlank{(3,0)}{white}
	\tgBlank{(4,0)}{white}
	\tgBlank{(5,0)}{white}
	\tgBlank{(0,1)}{white}
	\tgBlank{(1,1)}{white}
	\tgBorderC{(2,1)}{1}{white}{white}
	\tgBorderA{(3,1)}{white}{white}{white}{white}
	\tgBorder{(3,1)}{0}{1}{0}{1}
	\tgBorderA{(4,1)}{white}{white}{white}{white}
	\tgBorder{(4,1)}{0}{1}{0}{1}
	\tgBorderA{(5,1)}{white}{white}{white}{white}
	\tgBorder{(5,1)}{0}{1}{0}{1}
	\tgBlank{(0,2)}{white}
	\tgBorderA{(1,2)}{white}{white}{white}{white}
	\tgBorder{(1,2)}{0}{1}{0}{1}
	\tgBorderA{(2,2)}{white}{white}{white}{white}
	\tgBorder{(2,2)}{1}{0}{1}{1}
	\tgBlank{(3,2)}{white}
	\tgBlank{(4,2)}{white}
	\tgBlank{(5,2)}{white}
	\tgBlank{(0,3)}{white}
	\tgBlank{(1,3)}{white}
	\tgBorderC{(2,3)}{0}{white}{white}
	\tgBorderA{(3,3)}{white}{white}{white}{white}
	\tgBorder{(3,3)}{0}{1}{0}{1}
	\tgBorderA{(4,3)}{white}{white}{white}{white}
	\tgBorder{(4,3)}{0}{1}{0}{1}
	\tgBorderA{(5,3)}{white}{white}{white}{white}
	\tgBorder{(5,3)}{0}{0}{0}{1}
	\tgBlank{(0,4)}{white}
	\tgBlank{(1,4)}{white}
	\tgBlank{(2,4)}{white}
	\tgBlank{(3,4)}{white}
	\tgBlank{(4,4)}{white}
	\tgBlank{(5,4)}{white}
	\tgCell[(1,1)]{(3.5,3)}{f}
	\tgCell[(0,1)]{(0.5,2)}{p}
	\tgDot{(5,3)}{}
	\tgAxisLabel{(6,1.5)}{west}{X}
\end{tangle}
\end{equation*}
and so by the naturality of the delete map:
\begin{equation*}
% https://varkor.github.io/tangle/?t=W1tbXV0sW1tbMCxbXV0sWzAsW11dLFswLFtdXSxbMCxbXV0sWzAsW11dLFswLFtdXSxbMCxbXV1dLFtbMCxbXV0sWzAsW11dLFswLFtdXSxbMixbMl1dLFsxLFswLDEsMCwxXV0sWzEsWzAsMSwwLDFdXSxbMSxbMCwxLDAsMV1dXSxbWzEsWzAsMSwwLDBdXSxbMSxbMCwxLDAsMV1dLFsxLFswLDEsMCwxXV0sWzEsWzEsMCwwLDFdXSxbMCxbXV0sWzAsW11dLFswLFtdXV0sW1swLFtdXSxbMCxbXV0sWzAsW11dLFswLFtdXSxbMCxbXV0sWzAsW11dLFswLFtdXV1dLFtbMCwwLjUsMi41LFsicCIsMCwxXV0sWzAsMiwyLjUsWyJmIiwxLDFdXSxbMCw1LDEuNSxbImgiLDEsMV1dXSxbWzYsMSwwLCJYIl1dXQ==&c=F5A3A3,F5CCA3,F5F5A3,CCF5A3,A3F5A3,A3F5CC,A3F5F5,A3CCF5,A3A3F5,CCA3F5,F5A3F5,F5A3CC
\begin{tangle}{(7,4)}
	\tgBlank{(0,0)}{white}
	\tgBlank{(1,0)}{white}
	\tgBlank{(2,0)}{white}
	\tgBlank{(3,0)}{white}
	\tgBlank{(4,0)}{white}
	\tgBlank{(5,0)}{white}
	\tgBlank{(6,0)}{white}
	\tgBlank{(0,1)}{white}
	\tgBlank{(1,1)}{white}
	\tgBlank{(2,1)}{white}
	\tgBorderC{(3,1)}{1}{white}{white}
	\tgBorderA{(4,1)}{white}{white}{white}{white}
	\tgBorder{(4,1)}{0}{1}{0}{1}
	\tgBorderA{(5,1)}{white}{white}{white}{white}
	\tgBorder{(5,1)}{0}{1}{0}{1}
	\tgBorderA{(6,1)}{white}{white}{white}{white}
	\tgBorder{(6,1)}{0}{1}{0}{1}
	\tgBorderA{(0,2)}{white}{white}{white}{white}
	\tgBorder{(0,2)}{0}{1}{0}{0}
	\tgBorderA{(1,2)}{white}{white}{white}{white}
	\tgBorder{(1,2)}{0}{1}{0}{1}
	\tgBorderA{(2,2)}{white}{white}{white}{white}
	\tgBorder{(2,2)}{0}{1}{0}{1}
	\tgBorderA{(3,2)}{white}{white}{white}{white}
	\tgBorder{(3,2)}{1}{0}{0}{1}
	\tgBlank{(4,2)}{white}
	\tgBlank{(5,2)}{white}
	\tgBlank{(6,2)}{white}
	\tgBlank{(0,3)}{white}
	\tgBlank{(1,3)}{white}
	\tgBlank{(2,3)}{white}
	\tgBlank{(3,3)}{white}
	\tgBlank{(4,3)}{white}
	\tgBlank{(5,3)}{white}
	\tgBlank{(6,3)}{white}
	\tgCell[(0,1)]{(0,2)}{p}
	\tgCell[(1,1)]{(1.5,2)}{f}
	\tgCell[(1,1)]{(4.5,1)}{h}
	\tgAxisLabel{(7,1.5)}{west}{X}
\end{tangle}
=
% https://varkor.github.io/tangle/?t=W1tbXV0sW1tbMCxbXV0sWzAsW11dLFswLFtdXSxbMCxbXV0sWzAsW11dLFswLFtdXV0sW1swLFtdXSxbMCxbXV0sWzIsWzJdXSxbMSxbMCwxLDAsMV1dLFsxLFswLDEsMCwxXV0sWzEsWzAsMSwwLDFdXV0sW1swLFtdXSxbMSxbMCwxLDAsMV1dLFsxLFsxLDAsMCwxXV0sWzAsW11dLFswLFtdXSxbMCxbXV1dLFtbMCxbXV0sWzAsW11dLFswLFtdXSxbMCxbXV0sWzAsW11dLFswLFtdXV1dLFtbMCwxLDIuNSxbInAiLDAsMV1dXSxbWzUsMSwwLCJYIl1dXQ==&c=F5A3A3,F5CCA3,F5F5A3,CCF5A3,A3F5A3,A3F5CC,A3F5F5,A3CCF5,A3A3F5,CCA3F5,F5A3F5,F5A3CC
\begin{tangle}{(6,4)}
	\tgBlank{(0,0)}{white}
	\tgBlank{(1,0)}{white}
	\tgBlank{(2,0)}{white}
	\tgBlank{(3,0)}{white}
	\tgBlank{(4,0)}{white}
	\tgBlank{(5,0)}{white}
	\tgBlank{(0,1)}{white}
	\tgBlank{(1,1)}{white}
	\tgBorderC{(2,1)}{1}{white}{white}
	\tgBorderA{(3,1)}{white}{white}{white}{white}
	\tgBorder{(3,1)}{0}{1}{0}{1}
	\tgBorderA{(4,1)}{white}{white}{white}{white}
	\tgBorder{(4,1)}{0}{1}{0}{1}
	\tgBorderA{(5,1)}{white}{white}{white}{white}
	\tgBorder{(5,1)}{0}{1}{0}{1}
	\tgBlank{(0,2)}{white}
	\tgBorderA{(1,2)}{white}{white}{white}{white}
	\tgBorder{(1,2)}{0}{1}{0}{1}
	\tgBorderA{(2,2)}{white}{white}{white}{white}
	\tgBorder{(2,2)}{1}{0}{0}{1}
	\tgBlank{(3,2)}{white}
	\tgBlank{(4,2)}{white}
	\tgBlank{(5,2)}{white}
	\tgBlank{(0,3)}{white}
	\tgBlank{(1,3)}{white}
	\tgBlank{(2,3)}{white}
	\tgBlank{(3,3)}{white}
	\tgBlank{(4,3)}{white}
	\tgBlank{(5,3)}{white}
	\tgCell[(0,1)]{(0.5,2)}{p}
	\tgAxisLabel{(6,1.5)}{west}{X}
\end{tangle}.
\end{equation*}
So it is well defined to set $\tilde{\Psi}(h) \defeq \restr h {fp}$.
We can see that this is an inverse-with-support: to show this we must show that $\incl {\restr h {fp}}$ is an ordinary Bayesian inverse. i.e. that
\begin{equation*}
% https://varkor.github.io/tangle/?t=W1tbXV0sW1tbMCxbXV0sWzAsW11dLFswLFtdXSxbMCxbXV0sWzAsW11dLFswLFtdXSxbMCxbXV0sWzAsW11dLFswLFtdXSxbMCxbXV1dLFtbMCxbXV0sWzAsW11dLFswLFtdXSxbMixbMl1dLFsxLFswLDEsMCwxXV0sWzEsWzAsMSwwLDFdXSxbMSxbMCwxLDAsMV1dLFsxLFswLDEsMCwxXV0sWzEsWzAsMSwwLDFdXSxbMSxbMCwxLDAsMV1dXSxbWzEsWzAsMSwwLDBdXSxbMSxbMCwxLDAsMV1dLFsxLFswLDEsMCwxXV0sWzEsWzEsMCwxLDFdXSxbMCxbXV0sWzAsW11dLFswLFtdXSxbMCxbXV0sWzAsW11dLFswLFtdXV0sW1swLFtdXSxbMCxbXV0sWzAsW11dLFsyLFsxXV0sWzEsWzAsMSwwLDFdXSxbMSxbMCwxLDAsMV1dLFsxLFswLDEsMCwxXV0sWzEsWzAsMSwwLDFdXSxbMSxbMCwxLDAsMV1dLFsxLFswLDEsMCwxXV1dXSxbWzAsMiwyLjUsWyJmIiwxLDFdXSxbMCwwLjUsMi41LFsicCIsMCwxXV0sWzAsNC41LDEuNSxbIiJdXSxbMCw1LjUsMS41LFsiIl1dLFswLDcsMS41LFsiaCIsMSwxXV0sWzAsOC41LDEuNSxbIiJdXSxbMCw5LjUsMS41LFsiIl1dXV0=&c=F5A3A3,F5CCA3,F5F5A3,CCF5A3,A3F5A3,A3F5CC,A3F5F5,A3CCF5,A3A3F5,CCA3F5,F5A3F5,F5A3CC
\begin{tangle}{(10,4)}
	\tgBlank{(0,0)}{white}
	\tgBlank{(1,0)}{white}
	\tgBlank{(2,0)}{white}
	\tgBlank{(3,0)}{white}
	\tgBlank{(4,0)}{white}
	\tgBlank{(5,0)}{white}
	\tgBlank{(6,0)}{white}
	\tgBlank{(7,0)}{white}
	\tgBlank{(8,0)}{white}
	\tgBlank{(9,0)}{white}
	\tgBlank{(0,1)}{white}
	\tgBlank{(1,1)}{white}
	\tgBlank{(2,1)}{white}
	\tgBorderC{(3,1)}{1}{white}{white}
	\tgBorderA{(4,1)}{white}{white}{white}{white}
	\tgBorder{(4,1)}{0}{1}{0}{1}
	\tgBorderA{(5,1)}{white}{white}{white}{white}
	\tgBorder{(5,1)}{0}{1}{0}{1}
	\tgBorderA{(6,1)}{white}{white}{white}{white}
	\tgBorder{(6,1)}{0}{1}{0}{1}
	\tgBorderA{(7,1)}{white}{white}{white}{white}
	\tgBorder{(7,1)}{0}{1}{0}{1}
	\tgBorderA{(8,1)}{white}{white}{white}{white}
	\tgBorder{(8,1)}{0}{1}{0}{1}
	\tgBorderA{(9,1)}{white}{white}{white}{white}
	\tgBorder{(9,1)}{0}{1}{0}{1}
	\tgBorderA{(0,2)}{white}{white}{white}{white}
	\tgBorder{(0,2)}{0}{1}{0}{0}
	\tgBorderA{(1,2)}{white}{white}{white}{white}
	\tgBorder{(1,2)}{0}{1}{0}{1}
	\tgBorderA{(2,2)}{white}{white}{white}{white}
	\tgBorder{(2,2)}{0}{1}{0}{1}
	\tgBorderA{(3,2)}{white}{white}{white}{white}
	\tgBorder{(3,2)}{1}{0}{1}{1}
	\tgBlank{(4,2)}{white}
	\tgBlank{(5,2)}{white}
	\tgBlank{(6,2)}{white}
	\tgBlank{(7,2)}{white}
	\tgBlank{(8,2)}{white}
	\tgBlank{(9,2)}{white}
	\tgBlank{(0,3)}{white}
	\tgBlank{(1,3)}{white}
	\tgBlank{(2,3)}{white}
	\tgBorderC{(3,3)}{0}{white}{white}
	\tgBorderA{(4,3)}{white}{white}{white}{white}
	\tgBorder{(4,3)}{0}{1}{0}{1}
	\tgBorderA{(5,3)}{white}{white}{white}{white}
	\tgBorder{(5,3)}{0}{1}{0}{1}
	\tgBorderA{(6,3)}{white}{white}{white}{white}
	\tgBorder{(6,3)}{0}{1}{0}{1}
	\tgBorderA{(7,3)}{white}{white}{white}{white}
	\tgBorder{(7,3)}{0}{1}{0}{1}
	\tgBorderA{(8,3)}{white}{white}{white}{white}
	\tgBorder{(8,3)}{0}{1}{0}{1}
	\tgBorderA{(9,3)}{white}{white}{white}{white}
	\tgBorder{(9,3)}{0}{1}{0}{1}
	\tgCell[(1,1)]{(1.5,2)}{f}
	\tgCell[(0,1)]{(0,2)}{p}
	\tgRestr{(4,1)}{}
	\tgIncl{(5,1)}{}
	\tgCell[(1,1)]{(6.5,1)}{h}
	\tgRestr{(8,1)}{}
	\tgIncl{(9,1)}{}
\end{tangle}
=
.
\end{equation*}
But this is follows straightforwardly by two applications of \cref{thm:restr-incl-almost-inv} and the fact that $h$ is a Bayesian inverse.
%
%\begin{equation*}
%\input{diag/binv-equiv-proof-1.tex}
%=
%\input{diag/binv-equiv-proof-2.tex}.
%\end{equation*}
%and using the fact that $h$ is a Bayesian inverse this is equal to
%\begin{equation*}
%\input{diag/binv-equiv-proof-3.tex}
%\end{equation*}
%then by another application of \cref{thm:restr-incl-almost-inv} we are done.

We finally have that $\Psi(-)$ is inverse to $\tilde\Psi$ when viewed as maps to/from equivalence classes:
\begin{alignat*}{3}
%TODO: Make string diagrammy if time
	\Psi(\tilde\Psi(h)) &=& i_p \circ r_p \circ h \circ i_{f p} \circ r_{f p}\\
	  &\aeq_{f p}& i_p \circ r_p \circ h\\
	  &\aeq_{f p}& h
\end{alignat*}	
where the final equivalence uses the fact that $h$ is a Bayesian inverse to move it out of the way, similarly to the previous chain of equalities.
\end{proof}

Noting that all Bayesian inverses to $f$ at $p$ must be $(f\circ p)$-almost equal we obtain \cref{thm:unique-inversion} as a corollary, that Bayesian inverses with support are unique.

\uniqueInversionThm*
\qed

\end{document}